\providecommand{\tightlist}{%
  \setlength{\itemsep}{0pt}\setlength{\parskip}{0pt}}
\newcommand{\A}{\mathcal{A}}
\newcommand{\B}{\mathcal{B}}
\newcommand{\M}{\mathcal{M}}
\renewcommand{\S}{\mathcal{S}}
\newcommand{\mc}[1]{\mathcal{#1}}
\newcommand{\ra}{\rightarrow}
\newcommand{\Ra}{\Rightarrow}
\renewcommand{\phi}{\varphi}
\newcommand{\scrA}{\mathcal{A}}
\DeclareMathOperator{\restrict}{\upharpoonright}
\DeclareMathOperator{\ar}{\leadsto}
\DeclareMathOperator{\concat}{^\smallfrown}
\DeclareMathOperator{\enters}{\searrow}
\declaretheorem[name={Theorem}]{theorem}
\declaretheorem[name={Proposition}, sibling=theorem]{proposition}
\declaretheorem[name={Lemma}, sibling=theorem]{lemma}
\declaretheorem[name={Claim}, numberwithin=theorem]{claim}
\declaretheorem[name={Definition}, style=definition]{definition}
\declaretheorem[name={Corollary}, sibling=theorem]{corollary}
\declaretheorem[name={Question}]{question}
\def\paragraph{\@startsection{paragraph}{4}%
  \z@\z@{-\fontdimen2\font}%
  {\normalfont\itshape}}
\title{Degrees of categoricity and treeable degrees}
\author{Barbara F. Csima \and Dino Rossegger}
\date{\today}
\address{Department of Pure Mathematics, University of Waterloo, Waterloo, Ontario
}
\email{csima@uwaterloo.ca}
\address{Department of Mathematics, University of California, Berkeley {\normalfont and} Institute of Discrete Mathematics and Geometry, Technische Universit\"at Wien}
\email{dino@math.berkeley.edu}
\subjclass{03C57, 03D45}
\thanks{Csima is partially supported by an NSERC Discovery Grant. 
The work of the second author was supported by the European Union's Horizon 2020 Research and Innovation Programme under the Marie Sk\l{}odowska-Curie grant agreement No. 101026834 — ACOSE}
\begin{document}
\begin{abstract}
  We give a characterization of the strong degrees of categoricity of
  computable structures greater or equal to
  $\mathbf 0''$. They are precisely the \emph{treeable} degrees---the least
  degrees of paths through computable trees---that compute $\mathbf 0''$.
  As a corollary, we obtain several new examples of degrees of
  categoricity. Among them we show that every degree $\mathbf d$ 
  with $\mathbf 0^{(\alpha)}\leq \mathbf d\leq \mathbf 0^{(\alpha+1)}$ for $\alpha$
  a computable ordinal greater than $2$ is the strong degree of
  categoricity of a rigid structure. 
  Using quite different techniques we show
  that every degree $\mathbf d$ with $\mathbf 0'\leq \mathbf d\leq \mathbf
  0''$ is the strong degree of categoricity of a structure. Together with the above
  example this answers a question of Csima and Ng.
  To complete the picture we show that there is a degree $\mathbf d$ with
  $\mathbf 0'<
  \mathbf d< \mathbf 0''$ that is not the degree of categoricity of a rigid
  structure.
\end{abstract}
\maketitle

Two isomorphic copies of a mathematical structure share the same structural
properties and, thus, one usually considers structures up to isomorphism.
However, Fr\"ohlich and Shepherdson~\cite{frohlich1956} and, independently,
Malt'sev~\cite{maltsev1962} showed that isomorphic copies of a structure can
behave quite differently with respect to their algorithmic properties.
They produced two isomorphic computable fields $\mathcal K_1$ and $\mathcal K_2$
such that $\mathcal K_1$ has a computable transcendence basis, while $\mathcal
K_2$ fails to have a computable transcendence basis. This leads to the
conclusion that there cannot be a computable
function that is an isomorphism between these two fields~\cite[Corollary 5.51]{frohlich1956}.

Two computable isomorphic structures $\A$ and $\B$ that are computably
isomorphic have the same computability theoretic properties, and if $\A$ has
computable isomorphisms between all its computable isomorphic copies then $\A$
is said to be \emph{computably categorical}. As we have seen, even natural
structures fail to be computably categorical, and often one would like to know how far
apart the computable copies of these structures can be with respect to their computability
theoretic properties. This is best captured by measuring the Turing complexity
of the isomorphisms between computable copies. Towards this, Fokina, Kalimullin, and Miller
introduced the following notion~\cite{fokina2010}.
\begin{definition}
  Let $\tau$ be a computable vocabulary, $\A$ be a computable $\tau$-structure,
  and let $(\B_e)_{e\in\omega}$ be an enumeration of all computable
  $\tau$-structures. The \emph{categoricity spectrum of $\A$}
  is the set
  \[CatSpec(\A)=\bigcap_{e\in \omega: \B_e\cong \A}\{deg(X): (\exists
  f:\A\cong\B_e) X\geq_T f\}.\]
  If $\mathbf d\in CatSpec(\A)$ is a least element, then $\mathbf d$ is called
  the \emph{degree of categoricity of $\A$}.
\end{definition}
All known degrees of categoricity actually have the following stronger property.

\begin{definition}\cite{fokina2010} A degree of categoricity ${\bf d}$ is a
	\emph{strong} degree of categoricity if there is a structure
	$\scrA$ with computable copies $\scrA_0$ and $\scrA_1$ such
	that ${\bf d}$ is the degree of categoricity for $\scrA$, and
	every isomorphism $f: \scrA_0 \rightarrow \scrA_1$ satisfies
	deg$(f) \geq {\bf d}$.
\end{definition}

The study of this notion has been one of the most active areas in computable
structure theory in the last decade. See~\cite{franklin2017} for a survey of
developments until 2017. One of the main goals in the area is to obtain
a characterization of the Turing degrees that are degrees of categoricity.

Fokina, Kalimullin, and Miller~\cite{fokina2010} showed that every degree
$\mathbf d$ d-c.e.\ in and above
$\mathbf 0^{(n)}$ for some $n\in\omega$ is a strong degree of categoricity and that $\mathbf 0^{(\omega)}$
is a degree of categoricity. Csima, Franklin, and Shore~\cite{csima2013}
generalized these results to the hyperarithmetic hierarchy, showing that both
$\mathbf 0^{(\alpha)}$ for $\alpha$ a computable limit ordinal and every degree $\mathbf d$ d-c.e.\
in and above $\mathbf 0^{(\alpha)}$ for $\alpha $ computable successor ordinal
are strong degrees of categoricity. The former result was later improved by Csima,
Deveau, Harrison-Trainor, and Mahmoud~\cite{csima2018a} to degrees c.e.\ in and
above $\mathbf 0^{(\lambda)}$ for $\lambda$ limit ordinals.

It is known that not every Turing degree is a degree of categoricity. Csima,
Franklin, and Shore~\cite{csima2013} showed that every degree of categoricity must be
hyperarithmetic and Anderson and Csima~\cite{anderson2012} provided several
examples of degrees that are not degrees of categoricity. For example, they
obtained a $\Sigma^0_2$ degree that is not a degree of categoricity.

Recently, Csima and Ng~\cite{csima} showed that every $\Delta^0_2$ degree is
a strong degree of categoricity. They asked whether for every computable ordinal
$\alpha$, every Turing degree $\mathbf d$,
$\mathbf 0^{(\alpha)}\leq \mathbf d\leq \mathbf 0^{(\alpha+1)}$ is a degree of
categoricity.

The main goal of this article is a characterization of the strong degrees of categoricity on the cone above
$\mathbf 0''$. We characterize these degrees by showing that they are exactly the degrees of Turing-least
paths through computable trees in $\omega^{\omega}$
(\cref{cor:dgcatifftreeable}). Using classical results about $\Pi^0_1$ function
singletons we then obtain that every degree $\mathbf d$ with $\mathbf
0^{(\alpha)}\leq \mathbf d\leq\mathbf 0^{(\alpha+1)}$ for $\alpha$ a computable
ordinal greater or equal to $2$ is the degree of categoricity of a rigid
structure. Building on a construction by Csima and Ng~\cite{csima} that showed that every
$\Delta^0_2$ degree is a degree of categoricity we complete the picture by
showing that every degree
$\mathbf d$, $\mathbf 0'<\mathbf d< \mathbf 0''$ is a degree of
categoricity. We thus obtain a positive answer to their first question
(\cref{thm:delta2relativized}). We also obtain more exotic examples. Csima and Stephenson \cite{CS2019} exhibited an example of a degree of categoricity that does not belong to an interval of the form $[\mathbf 0^{(\alpha)}$,$\mathbf
0^{(\alpha+1)}$] for any computable ordinal $\alpha$, but their degree \emph{is}
computable from $\mathbf{0''}$. Building on work of
Harrington~\cite{harrington1976a,harrington1976}, we obtain for every
computable ordinal $\alpha$, a degree of
categoricity that is not between $\mathbf 0^{(\gamma)}$ and $\mathbf
0^{(\gamma+1)}$ for any computable ordinal $\gamma$ and not computable from
$\mathbf 0^{(\alpha)}$ (\cref{cor:nonarithcompd}).

Most of the above results are corollaries of our main theorem whose proof is a modification of a recent construction of Turetsky that coded paths through trees into the automorphisms of
a structure~\cite{turetsky2020}. In order to state it, we need to recall a bit
of notation. Given a tree $T\subseteq \omega^{<\omega}$ we denote by
$[T]\subseteq \omega^\omega$ the set of paths through $T$. For two sets
$P,Q\subseteq \omega^\omega$ we say that $P$ is Muchnik reducible to Q,
$P\leq_w Q$, if for every $q\in Q$ there is $p\in P$ with $p\leq_T q$. We can
computably translate elements of $\omega^\omega$ to elements of $2^\omega$ and
sets of natural numbers and thus will use Muchnik reducibility to compare sets
of these types. 
The \emph{computable dimension} of a structure is the number of computable
copies that are not computably isomorphic. This number is either finite or
$\omega$ and constructions of examples of finite computable
dimension usually involve heavy computability theoretic
machinery. The first example of such a structure was given by
Goncharov~\cite{goncharov1980}. The following is our main theorem from which
most other results in this article are derived.
\begin{theorem}\label{thm:rigiddgcat}
  Let $T\subseteq \omega^{<\omega}$ be a computable tree such that $[T]\geq_w
  \{\emptyset''\}$. Then there is a computable structure $\S_1$ with computable
  dimension $2$ such that
  \[ CatSpec(\S_1)=\{ \deg(X): \{X\}\geq_w [T] \}.\]
  Furthermore, if $[T]$ is a singleton, then $\S_1$ is rigid.
\end{theorem}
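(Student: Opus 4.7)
The plan is to adapt the construction of Turetsky~\cite{turetsky2020} that codes paths through a computable tree into isomorphisms of a structure. The structure $\S_1$ is built from ``gadgets'' indexed by the nodes $\sigma\in T$, with linking relations arranged so that any isomorphism $f\colon\A_0\to\A_1$ between the two designated computable copies of $\S_1$ is forced, at each level $n$, to make a discrete choice of child of the current node. The sequence of these choices is an element of $[T]$ that is Turing-equivalent to $f$ modulo the computable matching of the gadgets. Conversely, any $p\in[T]$ uniformly determines an isomorphism $\phi_p\colon\A_0\to\A_1$ computable from $p$.

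The main steps would be the following. First, fix the gadgets and their linking relations precisely, then exhibit the correspondence $p\mapsto \phi_p$ and its decoding $f\mapsto p_f$, concluding the equality $CatSpec(\S_1)=\{\deg(X):\{X\}\geq_w[T]\}$. Second, run a priority construction at the $\emptyset''$ level to build $\A_0$ and $\A_1$ computably, using the hypothesis $[T]\geq_w\{\emptyset''\}$ to carry the $\emptyset''$-approximations along each path and to ensure the coding invariants persist in the limit. Third, show that any computable copy $\B$ of $\S_1$ is computably isomorphic to either $\A_0$ or $\A_1$---otherwise one would recover a computable path through $T$, which in general does not exist---so the computable dimension is exactly~$2$. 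Finally, in the case $[T]=\{p\}$, observe that the unique available path forces a trivial automorphism group, yielding rigidity.

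I expect the hardest step to be the computable-dimension analysis: the gadgets must be simultaneously \emph{fine enough} that the decoding really recovers $p_f\in[T]$ from any isomorphism, yet \emph{coarse enough} that no computable copy of $\S_1$ other than those computably isomorphic to $\A_0$ or $\A_1$ arises. A secondary delicate point is ensuring that the sequence of choices made by an arbitrary isomorphism actually lies in $T$ rather than slipping off to a neighbouring node; this is precisely where the Muchnik hypothesis $[T]\geq_w\{\emptyset''\}$ enters, since $\emptyset''$ rules out off-tree approximations and is dominated by every path through~$T$.
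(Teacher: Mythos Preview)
Your outline captures the broad Turetsky framework, but two of your key mechanisms are misidentified, and without them the argument does not go through.

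First, the hypothesis $[T]\geq_w\{\emptyset''\}$ plays no role in the construction itself, and in particular it is \emph{not} what keeps the decoded sequence on the tree or what ``carries $\emptyset''$-approximations along each path.'' The construction in the paper (following Turetsky) produces, for an arbitrary computable tree $T$, a $\mathbf 0''$-computable tree $Q$ that is $\mathbf 0''$-isomorphic to $T$; an isomorphism between the two copies decodes to a path through $Q$, not through $T$. Getting back to $[T]$ costs $\emptyset''$ in general. The new idea is to \emph{actively code} $\emptyset''$ into the digits of $Q$ via extra priority requirements (the $R_i$'s), forcing $\sigma(i)$ to have a prescribed parity according to whether $i\in\emptyset''$. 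This makes every path through $Q$ compute $\emptyset''$ outright, so the automorphisms of the base structure are Muchnik-equivalent to $\{f\oplus\emptyset'':f\in[T]\}$ for \emph{every} computable $T$. The hypothesis $[T]\geq_w\{\emptyset''\}$ is invoked only at the last line, to collapse $\{f\oplus\emptyset'':f\in[T]\}$ to $[T]$. Your proposal does not contain this coding step, and without it the $\emptyset''$ overhead remains.

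Second, your computable-dimension argument (``otherwise one would recover a computable path through $T$'') is not correct: there is no mechanism by which a stray third computable copy would hand you a computable element of $[T]$. In the paper the dimension-$2$ conclusion comes from a completely different source: one first builds a \emph{computably categorical} structure $\S$ whose nontrivial automorphisms correspond to paths, using the $S_n$ predicates and $M_i$ requirements in a $\mathbf 0''$-priority argument to kill all nonstandard computable copies of $\S$. Only then does one adjoin two distinguished elements $a_{\mathrm{even}},a_{\mathrm{odd}}$ and a constant naming one of them to obtain $\S_1$ and $\S_2$; any computable copy of $\S_1$ restricts to a copy of $\S$, which by computable categoricity is computably isomorphic to the standard one, and the extension is then computably isomorphic to $\S_1$ or $\S_2$ depending on where the constant lands. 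You should separate these two layers (computable categoricity of the base, then the constant trick) rather than trying to argue dimension~$2$ directly from the path-coding.
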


\cref{thm:rigiddgcat} is proven in \cref{sec:paths} and its corollaries
are derived in \cref{sec:treeable}. In \cref{sec:csimang} we modify the
construction of Csima and Ng to show that every degree $\mathbf d$ with
$\mathbf 0'\leq\mathbf d\leq \mathbf 0''$ is the degree of categoricity of
a structure.
At last, in \cref{sec:bazhyama} we generalize a construction by Bazhenov and
Yamaleev~\cite{bazhenov2017d} to show that there is a Turing degree $\mathbf
d$, $\mathbf 0'\leq\mathbf d\leq\mathbf 0''$ that is not the degree of
categoricity of a rigid structure. This shows that the lower bound in \cref{thm:rigiddgcat} can
not be improved.

\section{Characterizing degrees of categoricity above $\mathbf 0''$}
\subsection{Degrees of categoricity and paths}\label{sec:paths}
Recall that for two sets $X,Y\subseteq \omega^\omega$, $X$ is \emph{Muchnik
reducible} to $Y$, $X\leq_w Y$ if for every $y\in Y$ there is $x\in X$ such
that $x\leq_T y$.
Turetsky proved that given a computable tree
$T\subseteq \omega^{<\omega}$ there is a computable, computably categorical
structure $\S$ such that the paths of $T$ and the non-trivial automorphisms of
$\S$ are Muchnik equivalent modulo $\mathbf 0''$~\cite{turetsky2020}. In other words,
\[ \{ \emptyset''\oplus f: f\in [T]\} \equiv_w \{\emptyset'' \oplus \nu :\nu\in
Aut(\S)\setminus id\}.\]
Turetsky also exhibited how the structure $\S$ can be adapted to obtain
a structure $\S_1$ that is not hyperarithmetically categorical and has
computable dimension $2$. For this structure, it is the case that the
isomorphisms between the two copies witnessing the computable dimension are
Muchnik equivalent to the paths through $T$ modulo $\mathbf 0''$.

If one can eliminate the $\mathbf 0''$ in these results one obtains a coding
technique that allows the coding of paths through trees into categoricity
spectra of structures of computable dimension $2$. If $T$ has a unique path
$f$, i.e., $f$ is a $\Pi^0_1$ function singleton, then an analysis of Turetsky's
construction shows that the structure $\S_1$ obtained from $T$ is rigid. 

Our first result improves on Turetsky's by eliminating $\emptyset''$ on the right.
We obtain this result by adding requirements to Turetsky's construction with
the aim of coding $\emptyset''$ into the presentation of $\S$. 
\begin{lemma}\label{lem:pathsinaut}
Let $T\subseteq \omega^{<\omega}$ be a computable tree. Then there is a computable, computably categorical structure $\S$ such that
\[\{\nu: \nu\in (Aut(\S)-\{id\})\}\equiv_w \{ f\oplus \emptyset'': f\in [T]\}.\]
In particular, $|[T]|=1$ if and only if $|Aut(\S)-\{id\}|=1$.
\end{lemma}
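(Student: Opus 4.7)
The plan is to modify Turetsky's construction so that every non-trivial automorphism of $\S$ \emph{itself} computes $\emptyset''$, not merely the join $\nu \oplus \emptyset''$ as in the unrelativized version. Since Turetsky already guarantees that, for each $f \in [T]$, there exists a non-trivial automorphism $\nu \leq_T f \oplus \emptyset''$, the only new Muchnik reduction to be established is $f \oplus \emptyset'' \leq_T \nu$ for every non-trivial $\nu$. In particular, the forward direction of the equivalence is essentially inherited from Turetsky, so the whole weight of the lemma rests on forcing non-trivial automorphisms to reveal $\emptyset''$.

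The strategy is to attach to Turetsky's backbone a $\emptyset''$-coding apparatus. For each $e \in \omega$, attach a ``flag gadget'' $\mathcal G_e$ containing a distinguished pair $a_e, b_e$ in the same automorphism orbit, designed so that any non-trivial automorphism $\nu$ of $\S$ satisfies $\nu(a_e) = b_e$ if and only if $e \in \emptyset''$. Using the standard computable $\Sigma^0_2$-approximation $(\emptyset''_s)_{s \in \omega}$, the gadget $\mathcal G_e$ is built in stages: at each stage $s$ we add elements and relations reflecting the current guess $\emptyset''_s(e)$, with additional ``filler'' elements absorbing each of the finitely many revisions of the guess, so that the limit isomorphism type of $\mathcal G_e$ depends only on $\emptyset''(e)$. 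Attaching $\mathcal G_e$ to an orbit of Turetsky's backbone that is \emph{already} moved by every non-trivial automorphism ensures that the prescribed action on $(a_e, b_e)$ cannot be evaded.

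Granting the construction, the Muchnik equivalence follows. Given a non-trivial $\nu$, one computes $\emptyset''(e)$ uniformly by querying $\nu(a_e)$; Turetsky's original decoding applied to $\nu$ together with this $\emptyset''$ then recovers a path $f \in [T]$, giving $f \oplus \emptyset'' \leq_T \nu$. Conversely, given $f \oplus \emptyset''$, one composes Turetsky's $f$-driven backbone automorphism with the swaps on the $\mathcal G_e$ prescribed by $\emptyset''$ to obtain a non-trivial $\nu \leq_T f \oplus \emptyset''$. The ``in particular'' clause is immediate: if $|[T]| = 1$, the backbone admits a unique non-trivial automorphism and the action on each $\mathcal G_e$ is uniquely prescribed, so $\mathrm{Aut}(\S) \setminus \{\mathrm{id}\}$ is a singleton.

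The main obstacle will be preserving computable categoricity. Because each $\mathcal G_e$ undergoes finitely many revisions during the construction, two computable presentations of $\S$ may realize the final isomorphism type of the gadget through different approximation histories, so any computable isomorphism between them must be built back-and-forth without waiting for $\emptyset''(e)$ to settle. This forces $\mathcal G_e$ to carry enough computable symmetry---arrays of interchangeable buffer elements that absorb approximation changes---to permit such back-and-forth matching while still tying the $(a_e, b_e)$ orbit tightly to the backbone. A true-stages argument, in the spirit of the $\Delta^0_2$-coding constructions of Csima and Ng referenced later in the paper, is expected to reconcile the approximation revisions with the computable categoricity requirements, completing the proof.
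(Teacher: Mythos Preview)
Your proposal takes a genuinely different route from the paper. The paper adds no flag gadgets. Instead it interleaves new requirements $R_i$ (with outcomes $\infty$ and $fin$, tracking a fixed $\Sigma^0_2$ approximation to $\emptyset''$) into Turetsky's priority tree, and makes one further change: when an $N_{\pi\concat i}$ strategy declares the image $\zeta\concat m$ of $\pi\concat i$, it chooses $m$ even if the preceding $R_{|\pi|}$ strategy currently has outcome $\infty$, and $m$ odd otherwise. The tree $Q$ traced out by the $N_\pi$ strategies on the true path is then $\emptyset''$-isomorphic to $T$ and satisfies $2\nmid\sigma(i)\iff i\in\emptyset''$ for every $\sigma\in Q$. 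Since, exactly as in Turetsky, every non-trivial automorphism computes a path through $Q$, reading off the parities along that path yields $\emptyset''$. The categoricity strategies $M_i$ are left untouched, so computable categoricity is inherited verbatim from Turetsky's verification.

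Your gadget approach may be workable, but as written it is a plan rather than a proof: the obstacle you single out is genuine and you do not resolve it. Preserving computable categoricity after bolting on infinitely many $\Sigma^0_2$-revising gadgets would require its own argument interacting with Turetsky's $M_i$ strategies, and ``a true-stages argument \dots\ is expected'' is a promissory note. There is also a point you gloss over: you require that \emph{every} non-trivial automorphism act identically on $(a_e,b_e)$, yet the backbone admits many non-trivial automorphisms acting differently at any fixed level, so the attachment of $\mathcal G_e$ must somehow quotient out that variation while remaining computable. The paper's parity device sidesteps both issues at once: the bit $\emptyset''(i)$ is not stored on a fixed orbit but along the $i$th coordinate of whichever path the given automorphism traces, and the encoding piggybacks on a choice the $N_\pi$ strategies were already making, so no new categoricity work is needed.
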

\cref{lem:pathsinaut} is the main ingredient to our proof of
\cref{thm:rigiddgcat}. Its proof is a modification of Turetsky's infinite injury construction~\cite[Theorem 2]{turetsky2020}. 
We give a full description of the structure we are going to construct and the
construction, but only sketch
the verification in the sense that we prove that our construction works and
gives the desired results if Turetsky's initial construction works. Thus, the
reader is encouraged to read this together with Turetsky's proof for a full
verification.
\begin{proof}[Proof sketch.]
  Given a tree $T$ the vocabulary of $\S$ consists of a unary relations $U$,
  $(W_\sigma)_{\sigma\in\omega^{<\omega}}$, and $(S_n)_{n\in\omega}$ ($V_n$ in
  Turetsky's construction), binary relations
  $P$ and $(E_n)_{n\in\omega}$, and a unary function $f$. We denote by
  $[\omega]^{<\omega}$ the set of all finite subsets of the natural numbers. The universe of $\S$
  will be $[\omega]^{<\omega}\times\omega^{<\omega}\sqcup C$ where $C$ is an
  infinite computable set. The relation symbols $U$ and $f$ and the set $C$ are
  used to help with the following issue. During the construction we will want
  $S_n(x)$ to hold for larger and larger $n$ on elements of
  $[\omega]^{<\omega}\times \omega$. The issue is, that this would result in
  a c.e.\ structure and not a computable structure. To overcome this we do not
  define the $S_n$ directly on the $x$ but rather using an element $y\in C$
  that is associated to $x$ by $f$, i.e., $f(y)=x$. Formally, we will have
  the following.
  
  The relation $U$ is used to identify $C$, i.e.
  $U^\S=C$. We will have that ${S_n}^\S\cap \overline{C}=\emptyset$ while none of
  the relations $W_\sigma$, $E_n$ and $P$ hold on any element in $C$.
  We will also delcare $f^\S(x)=x$ for all $x\not\in C$ and for each $x\in C$
  $f(x)\not\in C$. Furthermore, for each $x\in C$ there will be
  a unique $n$ such that $S_n(x)$ holds. During the
  construction we will ignore $C$, $U$ and $f$ and simply declare that $S_n(x)$
  holds for some element $x$ in $[\omega]^{<\omega}\times \omega^{<\omega}$. What
  this means is that we pick an unused element $y\in C$, set $f(y)=x$ and
  $S_n(y)$ if no element with these properties exists at that point of the
  construction.

  With the exception of the relation $S_n$, our structure looks exactly as the one
  constructed in~\cite[Theorem 2]{turetsky2020}. For $(F,\tau),(G,\rho)\in
  [\omega]^{<\omega}\times \omega^{<\omega}$ we have that:
  \begin{itemize}\tightlist
    \item ${W}^\S_\sigma((F,\tau))$  if and only if $\sigma=\tau$
    \item ${E_i}^\S((F,\tau),(G,\rho))$ if and only if $\tau=\rho$ and $F\Delta
      G=\{i\}$
    \item $P^\S((F,\rho),(G,\tau))$ if and only if $\tau=\rho\concat i$ for some $i$
      and one of the following holds:
      \begin{itemize}\tightlist
        \item $i\not\in F$ and $|G|$ is even or
        \item $i\in F$ and $|G|$ is odd.
      \end{itemize}
  \end{itemize}
  It is convenient to think of each $W_\sigma$ slice of $\S$ as an infinite
  dimensional hypergraph with the edge relations given by the $E_i$. One can
  then prove that the automorphisms of these slices in the reduct
  $(E_i)_{i\in\omega}$ are exactly the maps of the form
  $(F,\sigma)\mapsto(F\Delta H,\sigma)$ for some fixed $H\in
  [\omega]^{<\omega}$~\cite[Claim 2.1]{turetsky2020}. So, let $g$ be
  a non-trivial automorphism of the structure we have defined so far, then
  $g$ acts non-trivially on one of the $W^\S_\sigma$, i.e., $g:(F,
  \sigma)\mapsto (F\Delta H,\sigma)$ for $H\neq \emptyset$. In particular
  $g:(\emptyset,\sigma)\mapsto(H,\sigma)$. The $P$ predicate forces $g$ to
  act on $\sigma\concat i$ for $i\in H$. This can be used to prove that the
  automorphisms of this presentation of $\S$ are Muchnik equivalent to the
  paths through $T$.

  The only remaining issue is that $\S$ has many computable presentations and
  so far we have only controlled the automorphisms of one such presentation.
  This is where the $S_n$ predicates come in. They are used to ensure that
  $\S$ is computably categorical. This process is what requires a $\mathbf 0''$
  priority argument and thus introduces the $\emptyset''$ in the original
  construction. The construction will give rise to a $\mathbf 0''$ computable tree
  $Q\subseteq \omega^{<\omega}$ that is $\mathbf 0''$ isomorphic to $T$ such that for $\sigma\in Q$, $S_n((F,\sigma))$
  will hold for all $n\in\omega$ and $F\in [\omega]^{<\omega}$ and for
  $\sigma\not\in Q$, there is an $n$ such that $S_n((F,\sigma))$ holds if and
  only if $F=\emptyset$. One can show that the automorphisms of $\S$ are then
  Muchnik equivalent to the paths through $Q$. In particular, if $[Q]$ is a singleton, then there is only one non-trivial automorphism of $\S$. 

  However, $Q$ is only $\mathbf 0''$ isomorphic to the original tree
  $T$. This is why we modify the construction by forcing that $\sigma\in Q$
  with $\sigma(i)\downarrow$ satisfies $2\not\mid \sigma(i)$ if and only if $i\in
  \emptyset''$. Now, given an automorphism of $\S$ we can compute a path $f\in
  [Q]$ and from $f\oplus \emptyset''$ we can compute a path through $T$ by
  computing the isomorphism.
  However, the condition on $Q$ guarantees that $f\geq_T \emptyset''$ and thus
  $\{\nu:\nu\in (Aut(\S)\setminus \{id\})\}\geq_w \{f\oplus \emptyset'':f\in [T]\}$.
  That $\{\nu:\nu\in (Aut(\S)\setminus \{id\})\}\leq_w \{f\oplus
  \emptyset'':f\in [T]\}$ follows from a similar argument. Given
  $f\in [T]$, $f\oplus\emptyset''$ can compute an element of $[Q]$ and thus
  a non-trivial automorphism of $\S$.
  
  \emph{Construction.}
  Turetsky's construction had a requirement $G$, requirements $N_\pi$ for every
  $\pi\in T$ and $M_i$ for every $i\in\omega$. We add requirements $R_i$ for
  $i\in\omega$ where $R_i$ has the aim to code the membership of $i$ in
  $\emptyset''$.

  We use Turetsky's order of requirements, interleaving our
  requirements $R_i$ such that $R_i\leq N_\pi$ for all $\pi$
  with $|\pi|=i+1$. The
  potential outcomes for $R_i$ are $\infty\leq fin$.
  Using the fact that $\emptyset''$ is $\Sigma^0_2$ we fix a computable
  function $f$ such that $x\in \emptyset''$ if and only if $W_{f(x)}$ is
  a proper initial segment of $\omega$ and $W_{f(x)}=\omega$ otherwise. We will also need to modify Turetsky's strategy for $N_\pi$ where
  $\pi\neq \emptyset$. 
  The goal of the strategy $M_i$ is to ensure that if $\tau$ is a strategy on
  the true path (i.e., the left most path of the priority tree visited
  infinitely often) and $\M_i$ -- the $i$th computable structure in a computable enumeration of the
  structures in the language of $\S$ -- is isomorphic to $\S$, then $\M_i$ is
  computably isomorphic to $\S$. This strategy is unmodified and we thus omit
  it here.

  At the end of every stage $s$, declare
  $S_k((\emptyset,\sigma))$ for every $k<s$ and $\sigma\in s^{<s}$ not chosen by any
  strategy.

  \emph{Strategy for $R_i$.} Suppose $\tau$ is a strategy for $R_i$ visited at stage
  $s$. Check if $W_{f(i),s}\supset W_{f(i),t}$ where $t$ is the last stage that
  $\tau$ acted if it exists. If so
  let the outcome of $\tau$ be $\infty$. Otherwise finish with outcome $fin$.

  \emph{Strategy for $N_{\pi\concat i}$.} Suppose $\tau$ is a strategy for the
  requirement $N_{\pi\concat i}$. Then there is a unique $\rho\subset \tau$
  such that $\rho$ is a strategy for $N_\pi$ and a unique $\sigma\subset\tau$
  such that $\sigma$ is a strategy for $R_{|\pi|}$. Suppose $s_0$ is the first
  stage such that $\tau$ is visited and that $\rho$ has declared $\zeta$ to be the image
  of $\pi$. If the outcome of $\sigma$ is $\infty$, then choose the least even $m>s_0$ not mentioned in
  the construction and declare $\zeta\concat m$ to be the image of $\pi\concat i$.
  Otherwise choose the least odd $m>s_0$ not mentioned in the construction and declare $\zeta\concat m$ to
  be the image of $\pi\concat i$.

  At every stage $s$ when $\tau$ is visited, declare $S_n((\emptyset,\zeta\concat m))$ for the
  least $n$ such that $S_n((\emptyset,\zeta\concat m))$ does not hold at this
  stage and declare $S_k((F,\zeta\concat m))$ for all $k<n$ and
  $F\subseteq\{0,\dots,s\}$. Take the outcome \verb+outcome+.

  \emph{Verification.} Let $P$ be the tree of strategies of this construction
  and let $T$ be the tree of strategies of \cite[Theorem 2]{turetsky2020}. Given
  $\sigma\in P$, define its reduct $\sigma_r\in T$ by deleting the occurences
  of
  $R$-strategies from $\sigma$. I.e., assume without loss of generality that
  no $R_i$ strategy is succeeded by any $R_j$ strategy for $i,j\in\omega$ and define 
  \[ k(-1)=-1 \text{ and } k(i)=\begin{cases} k(i-1)+1
  & \sigma(k(i-1)+1)\not\in \{\infty,fin\}\\
  k(i-1)+2 &\text{otherwise}\end{cases} \]
  for $i\geq 0$ and let $\sigma_r(i)=\sigma(k(i))$.
  Let $P^-=\left\{ \sigma\in P: \sigma(|\sigma|-1)\not\in
  \{\infty,fin\}\right\}$.
  While $P^-$ is not a set-theoretic tree, it still is a tree as a partial
  order. It is then easy to see that the map $h: P^{-}\to T: \sigma\mapsto
  \sigma_r$ is a homomorphism of trees that preserves the priority ordering,
  i.e., for $\sigma,\tau\in P^{-}$
  \[ \sigma\preccurlyeq\tau \implies h(\sigma) \preccurlyeq h(\tau) \text{ and
  } \sigma\leq^P \tau \implies h(\sigma)\leq^T h(\tau).\]
  \begin{claim}\label{claim:carryover}
    If a path $f\in [T]$ is visited infinitely often, then there is
    a path $g\in [P^-]=[P]$ such that $h(g)=f$. Moreover, if $f$ is
    least in $[T]$ with respect to the priority ordering, then $g$ can be taken 
    least in $[P]$.
  \end{claim}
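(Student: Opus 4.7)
The plan is to build $g$ coordinate by coordinate by inserting into $f$ the ``true outcomes'' of the interleaved $R$-strategies of $P$. Since the priority ordering of $P$ is obtained from that of $T$ by placing each $R_i$ just above the strategies $N_\pi$ with $|\pi|=i+1$, the positions of $R$-coordinates in any lift of $f$ are predictable; at such a position for $R_i$, set $g$ to $\infty$ if $i\in\emptyset''$ and to $fin$ otherwise, and at all other positions copy the corresponding coordinate of $f$. By definition of the reduct map $h$ we then have $h(g)=f$, and $g$ lies in $[P^-]=[P]$ viewed as a path through the partial order.

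To check that $g$ is visited infinitely often, I would induct on the length $n$ of its initial segments. If $g(n)$ is an $R_i$-outcome, then by the choice of the $\Sigma^0_2$-index function, $R_i$ takes outcome $\infty$ at a given visit exactly when the relevant c.e.\ set has grown since its previous action; this happens infinitely often iff that set equals $\omega$, i.e.\ iff $i\in\emptyset''$, which matches our prescription. If $g(n)$ is the outcome of an $N_\pi$, $M_j$, or $G$ strategy, then that strategy behaves in $P$ exactly as in Turetsky's original construction in $T$, except that $N_{\pi\concat i}$ consults the outcome of $R_{|\pi|}$ only to fix the parity of its chosen witness $m$. Since $h(g\restriction n)$ is an initial segment of the infinitely-often visited path $f\in[T]$, Turetsky's verification transfers to show that $g(n)$ is the true outcome beneath $g\restriction n$ here as well.

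For the leftmost clause, let $f$ be $\preccurlyeq$-least in $[T]$ and let $g'\in[P]$ be any path visited infinitely often. Then $h(g')\in[T]$ is visited infinitely often, so $f\preccurlyeq h(g')$. At each $R_i$-coordinate, $g'(n)$ must equal the infinitely-often outcome of $R_i$, since otherwise the initial segment would be abandoned after finitely many stages; that forced value is precisely the entry we chose for $g(n)$. Combining these two observations gives $g\preccurlyeq g'$ in the priority order on $P$, so $g$ is itself the leftmost such path.

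The main obstacle I expect is the non-$R$ case of the induction step: one must verify that interleaving the $R_i$-strategies does not disturb Turetsky's original $N_\pi$-verification. The key point is that $N_{\pi\concat i}$ may choose \emph{any} sufficiently large witness $m$, so restricting $m$ to a fixed parity only delays its action by finitely many stages and is absorbed by his infinite-injury priority argument. Once this is in place, the homomorphism $h$ carries the tree of infinitely-often outcomes in $P$ exactly onto that in $T$, and the claim follows.
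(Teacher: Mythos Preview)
Your proposal is correct and follows essentially the same route as the paper: fill the $R$-slots of $f$ with the true outcomes $t_i$ of the $R_i$-strategies, note that these outcomes are determined independently of all other strategies, and induct on the length of the resulting string. One inconsequential slip: under the paper's convention ($x\in\emptyset''$ iff $W_{f(x)}$ is a \emph{proper} initial segment of $\omega$) the true outcome of $R_i$ is $fin$ when $i\in\emptyset''$ and $\infty$ when $i\notin\emptyset''$, the reverse of what you wrote---but since this claim only needs that each $R_i$ has some fixed true outcome, your argument is unaffected.
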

  \begin{proof}
    Run a strategy $\sigma$ for $R_i$ in isolation and let the true outcome of
    $\sigma$ be the leftmost outcome of $\sigma$ that occurs infinitely often.
    Clearly the true outcome is $\infty$ if and only if $W_i$ is infinite and
    $fin$ otherwise. Furthermore, it does not depend on the outcome of any other strategies.
    Denote this outcome by $t_i$. Given a strategy $\sigma\in T$ that is
    visited infinitely often note that for $\tau,\rho\in h^{-1}(\sigma)$,
    $|\tau|=|\rho|$. Consider the string $\tau\in h^{-1}(\sigma)$ obtained by
    filling the gaps by $t_i$, i.e., if $\tau(j)$ is the $i$th occurence of
    $\infty$ or $fin$ in $\tau$, then $\tau(j)=t_i$. Clearly, $\tau$ is visited infinitely often. Also, no string that was visited infinitely often
    in $h^{-1}(\sigma)$ can be to the left of $\tau$ as they can only differ on
    positions containing $\infty$ or $fin$. The claim now follows by induction
    on the length of $\tau$.
  \end{proof}
  By \cref{claim:carryover} we have that if $f$ is the true path in our
  construction, then $h(f)$ is the true path in the original construction. We
  have to prove that the strategies fulfil their goals, i.e., the $M_i$
  strategy ensures that if $\M_i\cong \S$, then $\M_i$ is computably isomorphic
  to $\S$. The verification of this is exactly as in Turetsky's original proof
  and thus omitted here.
  We also have to show that the $N_\pi$ strategies together ensure that we compute
  isomorphisms through a tree $Q$ isomorphic to $T$. At last we have to
  prove that every automorphism computes $\emptyset''$. This is done via the
  following claims.
  Let $f$ be the true path through the
  construction, that is the lexicographically least path in the priority tree that is visited
  infinitely often during the construction.
  \begin{claim}\label{thm_aut:claim_coding}
    Let $(F,\sigma)\in \S$ with $F\neq \emptyset$. Then $S_n^\S(F,\sigma)$ for all $n\in\omega$ if and only if 
    \begin{enumerate}\tightlist
      \item\label{it:turclaim1} $\sigma$ is the image of $\pi$ as declared by some $N_\pi$ strategy
        $\tau\subset f$,
      \item for $i\in \omega$ such that $\pi(i)\downarrow$, $W_{f(i)}=\{1\dots n\}$ if $2\nmid\sigma(i)$ and $W_{f(i)}=\omega$
        otherwise.
    \end{enumerate}
  \end{claim}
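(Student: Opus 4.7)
The plan is to trace exactly which steps of the construction declare $S_n$ on tuples of the form $(F,\sigma)$ with $F\neq\emptyset$, and to check that the only mechanism producing such declarations is the strategy for $N_{\pi\concat i}$ acting on the image it has chosen for $\pi\concat i$. The end-of-stage clean-up only touches tuples of the form $(\emptyset,\sigma)$, so every declaration with $F\neq\emptyset$ originates from an $N$-strategy.

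For the direction $(\Leftarrow)$, I would use that the $N_\pi$-strategy $\tau\subset f$ of (1) lies on the true path and is therefore visited infinitely often. At each visit at stage $s$, $\tau$ increments the least index $n_s$ such that $S_{n_s}((\emptyset,\sigma))$ has not yet been declared, and simultaneously fills in $S_k((F,\sigma))$ for every $k<n_s$ and every $F\subseteq\{0,\dots,s\}$. Both $n_s$ and $s$ grow without bound across these visits, so every $(F,\sigma)$ with $F$ finite eventually receives $S_n$ for every $n\in\omega$.

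For the direction $(\Rightarrow)$, I would argue that if $F\neq\emptyset$ and $S_n((F,\sigma))$ holds for every $n$, then $\sigma$ must equal $\zeta\concat m$ for some image $\zeta$ of a unique $\pi$ declared by a unique $N_\pi$-strategy $\tau$. Uniqueness comes from the fresh-witness clause ``the least $m>s_0$ not mentioned in the construction'' in the $N_{\pi\concat i}$-strategy, which prevents the same $\sigma$ from being chosen twice. If $\tau\not\subset f$, then $\tau$ is visited only finitely often, so the index $n_s$ from the previous paragraph is bounded, contradicting that $S_n((F,\sigma))$ holds for all $n$. Hence $\tau\subset f$, giving (1).

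For (2) I would descend through the prefixes of $\pi$. For each $i<|\pi|$, the $N_{\pi(0)\dots\pi(i)}$-strategy on $\tau$ chose $\sigma(i)$ to be even or odd according to whether the unique $R_i$-strategy above it had outcome $\infty$ or $fin$. Because the entire chain sits below $\tau\subset f$, this outcome is the true outcome of $R_i$, which by design is $\infty$ iff the corresponding $W_{f(i)}$ is infinite, where here $f$ denotes the fixed computable function witnessing the $\Sigma^0_2$-ness of $\emptyset''$. Combining the parity rule with this dichotomy immediately yields (2). The main obstacle is in the $(\Rightarrow)$ direction, specifically ruling out the possibility that $(F,\sigma)$ acquires every $S_n$ through some ``accidental'' combination of declarations; this is handled precisely by the uniqueness of the chosen witness $m$ and the fact that strategies off the true path act only finitely often.
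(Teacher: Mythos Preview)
Your proposal is correct and follows essentially the same route as the paper: for $(\Leftarrow)$ you use that a strategy on the true path is visited infinitely often and hence eventually declares every $S_n$ on every $(F,\sigma)$; for $(\Rightarrow)$ you use that only $N$-strategies touch tuples with $F\neq\emptyset$, invoke freshness of the chosen $m$ for uniqueness, and then read off the parity coding of the $R_i$ outcomes along the true path.

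One small wording issue to tighten: the sentence ``If $\tau\not\subset f$, then $\tau$ is visited only finitely often'' is not literally true---a node strictly to the right of the true path can be visited infinitely often. What is true (and what the paper says) is that such a $\tau$ is \emph{initialized} infinitely often, so any single $\sigma$ it has declared is worked on only finitely many times before $\tau$ is reset and picks a fresh image. Your uniqueness argument already contains the ingredient needed to make this precise; just rephrase the finiteness claim as ``$\tau$ acts on this particular $\sigma$ only finitely often'' rather than ``$\tau$ is visited only finitely often.''
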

  \begin{proof}
    If $\sigma$ is the image of $N_\pi$ as declared by the unique $N_\pi$
    strategy $\tau\subset f$, then $S_n^\S(F,\sigma)$ for all $F\subseteq [\omega]^{<\omega}$ and
  $n\in\omega$.

  On the other hand, if $S_n^\S(F,\sigma)$ for all $n\in\omega$, then
  there is a unique $N_\pi$ strategy $\tau$ with $\sigma$ declared the image of
  $\pi$ that acts
  infinitely often. We have that $\tau\subset f$ because if $\tau$ was to the right
  of the true path then it could grow a single $\sigma$ only finitely many
  times before being initialized. Notice that by induction the map $\pi\mapsto
  \sigma$ mapping $N_\pi$ strategies to their images $\sigma$ is Lipschitz.
  Thus $\sigma(i)\downarrow$ for all $i$ such that $\pi(i)$ is defined.
  Furthermore, there are unique $\rho_i\subseteq \tau$ such that $\rho_i$ is
  a strategy for $R_i$. By $\tau$ being on the true path $\rho_i$ has outcome
$fin$ if and only if  $W_{f(i)}=\{1\dots n \}$ for some $n\in\omega$ and $\rho_i$
has outcome $\infty$ if and only if $W_{f(i)}=\omega$. By construction $2\nmid \sigma(i)$ in the first case and $2\mid \sigma(i)$ in the second
case.
  \end{proof}
  \begin{claim}
    For every non-trivial automorphism $g$ of $\S$, $g\geq_T \emptyset ''$.
  \end{claim}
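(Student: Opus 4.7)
The plan is to show that from a non-trivial automorphism $g$ I can $g$-computably build an infinite path $\sigma^{*}\in [Q]$ and then read $\emptyset''$ off its entries' parities via clause~(2) of~\cref{thm_aut:claim_coding}.

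First I would search $g$-computably for a slice $\sigma_0\in\omega^{<\omega}$ and a non-empty $H_0\in [\omega]^{<\omega}$ with $g((\emptyset,\sigma_0))=(H_0,\sigma_0)$. Such $\sigma_0$ must exist because the hypergraph analysis cited from~\cite[Claim 2.1]{turetsky2020} forces the action of $g$ on each $W_\sigma$-slice to be of the form $(F,\sigma)\mapsto(F\Delta H,\sigma)$, and the non-triviality of $g$ guarantees $H\neq\emptyset$ on some slice. Because $(\emptyset,\sigma_0)$ satisfies every $S_n$ (by the end-of-stage clauses of the construction) and $g$ preserves each $S_n$, the image $(H_0,\sigma_0)$ with $H_0\neq\emptyset$ also satisfies every $S_n$. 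By clause~(1) of~\cref{thm_aut:claim_coding}, this forces $\sigma_0\in Q$.

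Next I would extend to an infinite path inductively. Having built $\sigma_n\in Q$ with $g((\emptyset,\sigma_n))=(H_n,\sigma_n)$ and $H_n\neq\emptyset$, I pick any $i\in H_n$ and set $\sigma_{n+1}=\sigma_n\concat i$. The relation $P((\emptyset,\sigma_n),(\emptyset,\sigma_n\concat i))$ holds (since $i\notin\emptyset$ and $|\emptyset|$ is even), so preservation of $P$ combined with $i\in H_n$ yields $g((\emptyset,\sigma_n\concat i))=(H_{n+1},\sigma_n\concat i)$ with $|H_{n+1}|$ odd, and in particular $H_{n+1}\neq\emptyset$. The same $S_n$-argument as in the base case then places $\sigma_{n+1}\in Q$. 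Iterating produces a $g$-computable sequence whose union $\sigma^{*}$ is a path in $[Q]$.

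Finally, by clause~(2) of~\cref{thm_aut:claim_coding}, for every $i$ in the domain of $\sigma^{*}$ we have $2\nmid \sigma^{*}(i)$ iff $W_{f(i)}$ is a proper initial segment of $\omega$, which by the choice of $f$ is exactly the condition $i\in\emptyset''$. Hence reading parities of $\sigma^{*}$ computes $\emptyset''$, giving $g\geq_T \emptyset''$. The one point that requires care is the inductive step: one must check that the parity constraint built into $P$ really does keep the shift $H_n$ odd (hence non-empty) at every level, so that the extension never gets stuck and never escapes $Q$. Once that is in place, the rest is a direct unpacking of the preceding claim and of the $R_i$-strategy's coding of $\emptyset''$ into parities along $Q$.
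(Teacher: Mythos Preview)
Your proposal is correct and follows essentially the same route as the paper: both arguments start from a slice where $g$ acts nontrivially, use preservation of $P$ together with the choice of $i\in H_n$ to force the shift on the child slice to be nonempty (you phrase this as $|H_{n+1}|$ odd, the paper as $\neg P((H_\sigma,\sigma),(\emptyset,\sigma\concat i))$), invoke preservation of the $S_n$ to keep each $\sigma_n$ in $Q$ via \cref{thm_aut:claim_coding}, and then read $\emptyset''$ from the parities along the resulting path. The only cosmetic difference is that the paper picks the least $i\in H_\sigma$ while you pick any $i$, which is immaterial.
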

  \begin{proof}
    By \cite[Claim 2.1]{turetsky2020} for every automorphism $g$ and every
    $\sigma$ there is $H_\sigma\in [\omega]^{<\omega}$ such that $g((F,\sigma))=(F\Delta
    H_\sigma,\sigma)$ for all $F\in [\omega]^{<\omega}$.
    If $g$ is non-trivial then there must be
    $\sigma$ such that $g((\emptyset,\sigma))=(H_\sigma,\sigma)$ for some
    $H_\sigma\neq
    \emptyset$ with $S_n^\S((H_\sigma,\sigma))$ for all $n\in\omega$. Now, by
    \cref{thm_aut:claim_coding}, $\sigma$ codes an initial segment of $\emptyset ''$. We can now 
    compute $\emptyset''$ iteratively as follows. Pick the least $i\in H_\sigma$. We have that
    $P^\S((\emptyset, \sigma),(\emptyset,\sigma\concat i))$ but
    $\neg P^\S((H_\sigma,\sigma),(\emptyset,\sigma\concat i))$, so in particular
    $g((\emptyset,\sigma\concat i))=(H_{\sigma\concat i},\sigma\concat i)$ for
    some $H_{\sigma\concat i}\neq
    \emptyset$. As $S_n^\S(\emptyset,\sigma\concat i)$ for all $n$, also
    $S_n^\S(H_{\sigma\concat i}, \sigma\concat i)$ for all $n$. Thus, the
    strategy declaring $\sigma\concat i$ has been visited infinitely often and
    so $\sigma\concat i\in Q$. By \cref{thm_aut:claim_coding}, $2|i$ if and only if
    $i\not\in \emptyset''$. Iterate this procedure to compute initial segments
    of $\emptyset''$ of arbitrary length and thus $\emptyset''$.
  \end{proof}
  Let $\sigma_\tau$ be the image of $\tau$ as declared by some $N_\tau$
  strategy on $f$ and $\sigma_\rho$ be the image of $\rho$ as declared by some
  $N_\rho$ strategy on $f$. Then by choice of $\sigma_\tau$, $\sigma_\rho$
  during the construction $\sigma_\tau\preccurlyeq \sigma_\rho$ if and only if
  $\tau\preccurlyeq \rho$. Let $Q$ be the set of images of the $N_\pi$
  strategies, then $Q$ is a tree isomorphic to $T$. Notice that $\mathbf 0''$
  can compute the true path and thus $\{f\oplus \emptyset'':f\in [T]\}\equiv_w
  [Q]$. To finish the proof it remains to show the following.
  \begin{claim}
    $|\{\nu:\nu\in (Aut(\S)\setminus \{id\})\}|=1$ if and only if $|[Q]|=1$.
  \end{claim}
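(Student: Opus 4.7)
The plan is to analyze non-trivial automorphisms of $\S$ in terms of the ``difference'' families $(H_\sigma)_{\sigma\in\omega^{<\omega}}$ and to show that, when $|[Q]|=1$, such a family is forced to take a single specific form. I would first invoke \cite[Claim~2.1]{turetsky2020}: the reduct of $\S$ to the $(E_i)_{i\in\omega}$ on each slice $W_\sigma^\S$ has automorphisms of the form $(F,\sigma)\mapsto(F\Delta H_\sigma,\sigma)$ with $H_\sigma\in[\omega]^{<\omega}$, so any automorphism $g$ of $\S$ is determined by the family $(H_\sigma)$ (the action on the auxiliary set $C$ is then forced by $f^\S$). Two constraints cut this family down: the definition of $P^\S$ imposes
\[ i\in H_\sigma \iff |H_{\sigma\concat i}|\text{ is odd},\]
while \cref{thm_aut:claim_coding} forces $H_\sigma=\emptyset$ whenever $\sigma\notin Q$.

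Assuming $[Q]=\{f\}$, I would then argue that any non-trivial $g$ has $H_\sigma\neq\emptyset$ for some $\sigma\in Q$; for each $i\in H_\sigma$, the parity constraint gives $H_{\sigma\concat i}\neq\emptyset$ and hence $\sigma\concat i\in Q$, so iterating produces a path in $[Q]$ through $\sigma\concat i$. By uniqueness of $f$, every such extension must be $f$, which forces $\sigma\subset f$ and $H_\tau\subseteq\{f(|\tau|)\}$ for $\tau\subset f$ (and $H_\tau=\emptyset$ otherwise); in particular $|H_\tau|\le 1$ throughout. The next step is to show that the set $A=\{n:H_{f\restrict n}\neq\emptyset\}$ is both upward closed (if $n\in A$ then $f(n)\in H_{f\restrict n}$, so $|H_{f\restrict(n+1)}|$ is odd, hence positive) and downward closed (if $n+1\in A$ then $|H_{f\restrict(n+1)}|=1$ is odd, forcing $f(n)\in H_{f\restrict n}$). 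Since $g$ is non-trivial, $A\neq\emptyset$, so $A=\omega$ and $g$ is uniquely given by $H_{f\restrict n}=\{f(n)\}$.

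For the converse I would split on whether $[Q]$ is empty or has at least two paths. If $[Q]=\emptyset$, the same tracing argument shows that any non-trivial automorphism would produce a path through $Q$, a contradiction, so $\mathrm{Aut}(\S)=\{\mathrm{id}\}$. If $|[Q]|\geq 2$, I would pick distinct $f_1,f_2\in[Q]$ and define $g_{f_j}$ by $H_\sigma=\{f_j(|\sigma|)\}$ for $\sigma\subset f_j$ and $H_\sigma=\emptyset$ otherwise; a quick check of the two constraints (sizes are $0$ or $1$ with the correct parities, and $H_\sigma\neq\emptyset$ only when $\sigma\subset f_j\in[Q]$) shows that each $g_{f_j}$ is a genuine non-trivial automorphism, and they disagree at the least $n$ with $f_1(n)\neq f_2(n)$. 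In either case $|\mathrm{Aut}(\S)\setminus\{\mathrm{id}\}|\neq 1$.

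The hard part will be the combinatorial verification that, in the unique-path case, the set $A$ must be both up- and down-closed along $f$; this is the engine that pins the automorphism down to a single shape. Once that step is in hand, both directions follow by straightforward case analysis on the cardinality of $[Q]$.
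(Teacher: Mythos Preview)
Your proposal is correct and takes essentially the same approach as the paper, which likewise uses the $(H_\sigma)$ decomposition from \cite[Claim~2.1]{turetsky2020} together with the correspondence between non-trivial automorphisms and paths through $Q$. The only difference is one of detail: where the paper simply cites \cite[Claim~2.2]{turetsky2020} for the fact that each $i\in H_\sigma$ yields a path through $\sigma\concat i$ and that each path yields an automorphism, you unpack this by deriving the parity constraint $i\in H_\sigma\iff |H_{\sigma\concat i}|$ odd from $P^\S$ and running the up/down-closed argument along the unique path explicitly.
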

  \begin{proof}
    Again, by~\cite[Claim 2.1]{turetsky2020} for every automorphism $g$ and every
    $\sigma$ there is $H_\sigma\in [\omega]^{<\omega}$ such that $g((F,\sigma))=(F\Delta
    H_\sigma,\sigma)$ for all $F\in [\omega]^{<\omega}$. In the proof of~\cite[Claim 2.2]{turetsky2020}, Turetsky constructs for each $i\in H_\sigma$ a path $f$ such that $\sigma\concat i\subset f$. So, if $|[Q]|=1$ there can be at most one non-trivial automorphism of $\S$ and if there is an automorphism then $[Q]$ can not be empty. On the other hand, Turetsky demonstrates for each $f\in [Q]$ and each $\sigma\concat i \subset f$ that there is an automorphism of $\S$ with $(\emptyset, \sigma)\mapsto (\{i\},\sigma)$. Thus, if $|\{\nu:\nu\in (Aut(\S)\setminus \{id\})\}|=1$, then $|[Q]|=1$ and if $[Q]$ is non-empty, then there must be a non-trivial automorphism of $\S$.
  \end{proof}
\end{proof}
Modifying the structure $\S$ slightly as in~\cite[Corollary 3]{turetsky2020} we can
obtain a proof of \cref{thm:rigiddgcat}.
\begin{proof}[Proof of \cref{thm:rigiddgcat}]
  Let $\S$ be the structure from \cref{lem:pathsinaut}. The structure $\S_1$ is
  obtained by adding two new elements $a_{odd}$ and $a_{even}$ to $\S$ and
  a new constant symbol $c$ to its vocabulary. The structure $\S_1$ satisfies
  \begin{itemize}
    \item $P^{\S_1}(a_{even}, (F,\langle\rangle))$ if and only if $|F|$ is
      even,
    \item $P^{\S_1}(a_{odd}, (F,\langle\rangle))$ if and only if $|F|$ is odd, 
    \item $c^{\S_1}=a_{even}$,
  \end{itemize}
  and no other relation holds on $a_{even}$ or $a_{odd}$. Let $\S_2$ be
  identical to $\S_1$ except that $c^{\S_2}=a_{odd}$. Turetsky proved that this
  structure has computable dimension 2 and that every path through $Q$ computes an
  isomorphism between $\S_1$ and $\S_2$ and vice versa. We will include this
  here for completeness and include a proof that this structure is rigid if
  $[Q]$ is a singleton. \cref{thm:rigiddgcat} then follows from the fact that
  $Q\cong T$ and if $[T]\geq_w \{\emptyset''\}$, then $[T]\equiv_w
  \{f\oplus \emptyset'': f\in [Q]\}\equiv_w [Q]$.

  Note that given $\B$ isomorphic to $\S_1$ via $f$, the substructure of $\B$ that does not
  contain $f(a_{even})$, $f(a_{odd})$ in the language of $\S$ is
  isomorphic to $\S$. As $\S$ is computably categorical, there is a computable
  isomorphism $g$ between this substructures. This isomorphism can be extended to an isomorphism between
  $\S_1$ and $\B$ by letting $g(a_{even})=f(a_{even})$ and
  $g(a_{odd})=f(a_{odd})$. If $c^\B=a_{even}$, this isomorphism is clearly
  computable. Otherwise, using a similar argument we obtain a computable
  isomorphism between $\B$ and $\S_2$.

  Notice that every isomorphism $g$ between $\S_1$ and $\S_2$ induces a non-trivial
  automorphism of $\S$ as $g(a_{even})=a_{odd}$ and hence $g((\emptyset,
  \langle\rangle))=(F,\langle\rangle)$ for some non-empty set $F$. But if $[T]$
  is a singleton, then $\S$ has exactly one non-trivial automorphism. Note,
  that in this case there is no non-trivial automorphism of $\S_1$, as the only
  such automorphism of $\S_1$ would map $(\emptyset, \langle\rangle)\mapsto
  (\{n\}, \langle\rangle)$ and hence $c^{\S_1}=a_{even}\mapsto a_{odd}\neq
  c^{\S_1}$. Thus, $\S_1$ is rigid.
\end{proof}

\subsection{Treeable degrees}\label{sec:treeable}
\begin{definition}[\cite{bazhenov2018}]
  For any computable structure $\S$, the \emph{spectral
  dimension} of $\S$ is the least $k\leq \omega$ such that there exists an
  enumeration $(\A_i,\B_i)_{i<k}$ of pairs of computable copies of $\S$ such
  that 
  \[ CatSpec(\S)=\bigcap_{i<k}\{ deg(X): (\exists f\leq_T X) f:\A_i\cong \B_i\}.\]
  Notice that a degree $\mathbf d$ is a \emph{strong degree of categoricity} if there is
  a structure $\S$ of spectral dimension $1$ with degree of categoricity
  $\mathbf d$.
\end{definition}

Recall that $P\subseteq\omega^\omega$ is a $\Pi^0_1$ class if there is
a computable tree $T$ with $P=[T]$.
\begin{definition}
  A Turing degree $\mathbf d$ is \emph{treeable} if there is a $\Pi^0_1$ class
  $P$ such that there exists $p\in P$ with $deg(p)=\mathbf d$ and $P\geq_w
  \{p\}$.
\end{definition}

\begin{lemma}\label{lem:dgcattreeable}
  Every strong degree of categoricity is treeable.
\end{lemma}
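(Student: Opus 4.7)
The plan is to let $P$ be a $\Pi^0_1$ coding of the isomorphisms between the two computable copies $\scrA_0,\scrA_1$ witnessing that $\mathbf{d}$ is a strong degree of categoricity of some structure $\scrA$, and to extract the required Turing-least path from the fact that $\mathbf{d}$ is the degree of categoricity of $\scrA$.

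First I would define $P\subseteq\omega^{\omega}$ by identifying each $h$ with a pair $(f,g)$ via $f(n)=h(2n)$ and $g(n)=h(2n+1)$, and demanding that $f$ preserves every atomic formula and that $f(g(n))=g(f(n))=n$ for all $n$. Because $\scrA_0,\scrA_1$ have decidable atomic diagrams, each clause is $\Pi^0_1$ in $h$, so $P=[T]$ for some computable tree $T\subseteq\omega^{<\omega}$. Encoding the inverse as part of the path is the trick that avoids the usual $\Pi^0_2$ cost of asserting surjectivity of $f$ alone.

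Next I would produce the witness $p\in P$ of degree $\mathbf{d}$. Because $\mathbf{d}\in CatSpec(\scrA)$, some $X$ of degree $\mathbf{d}$ uniformly computes isomorphisms from $\scrA$ to every computable copy; in particular $X$ computes an isomorphism $f_0\colon \scrA_0\to\scrA_1$, so $f_0\leq_T X$. Strongness forces $\deg(f_0)\geq \mathbf{d}$, so $\deg(f_0)=\mathbf{d}$. Since $f_0^{-1}$ is computable from $f_0$ by searching for preimages, $p:=(f_0,f_0^{-1})\in P$ has degree $\mathbf{d}$.

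Finally, I would verify $P\geq_w \{p\}$, i.e., $p\leq_T q$ for every $q\in P$. For such $q=(f,g)$, $f$ is an isomorphism $\scrA_0\to\scrA_1$, so $\deg(f)\geq \mathbf{d}$ by strongness, and hence $f$ computes any fixed set of degree $\mathbf{d}$---in particular $X$---which in turn computes $f_0$. Chaining the reductions gives $p\leq_T f_0\leq_T X\leq_T f\leq_T q$, so $P\geq_w \{p\}$. The only genuinely delicate step is the $\Pi^0_1$ presentation of $P$; once the inverse is bundled into the path, the rest is a direct unwinding of the definitions of strong degree of categoricity and of Muchnik reducibility.
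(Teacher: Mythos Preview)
Your proof is correct and follows the same approach as the paper: take $P$ to be the $\Pi^0_1$ class of isomorphisms between the two witnessing copies and use strongness to extract a Turing-least element of degree $\mathbf d$. The paper's proof is terser---it simply asserts that the isomorphisms form a $\Pi^0_1$ class---whereas you make explicit the standard device of pairing $f$ with $f^{-1}$ to keep surjectivity $\Pi^0_1$.
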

\begin{proof}
  Assume that $\A_1\cong \A_2$ witness that $\mathbf d$ is a strong degree of
  categoricity. The isomorphisms between these two structures form a $\Pi^0_1$
  class and thus there is a computable tree $T$ with $[T]=\{ f: f:\A_1\cong
\A_2\}$. By definition of degrees of categoricity the Turing-least element of this class is of degree $
\mathbf d$.
\end{proof}
Clearly, if a structure $\S$ has computable dimension $2$, then it has spectral
dimension $1$ and thus any degree of categoricity of a structure with computable dimension $2$
must be strong. We thus obtain the following from \cref{thm:rigiddgcat}.
\begin{corollary}\label{cor:dgcatifftreeable}
  Let $\mathbf d\geq \mathbf 0''$. Then $\mathbf d$ is a strong degree of categoricity
  if and only if it is treeable.
\end{corollary}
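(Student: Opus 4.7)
The plan is to derive both implications directly from the results already established in the paper; there is essentially no new work to do here, only unpacking of definitions and a careful application of \cref{thm:rigiddgcat}.

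For the forward direction, I would simply invoke \cref{lem:dgcattreeable}, which already shows (without any hypothesis on $\mathbf d$) that every strong degree of categoricity is treeable, since the set of isomorphisms between the two witnessing computable copies is a $\Pi^0_1$ class with a Turing-least element of degree $\mathbf d$.

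For the backward direction, suppose $\mathbf d \geq \mathbf 0''$ is treeable. Then by definition there is a computable tree $T \subseteq \omega^{<\omega}$ and an element $p \in [T]$ with $\deg(p) = \mathbf d$ such that $[T] \geq_w \{p\}$, i.e., $p$ is Turing-least in $[T]$. Because $p \geq_T \emptyset''$, every element of $[T]$ computes $\emptyset''$, so $[T] \geq_w \{\emptyset''\}$, and \cref{thm:rigiddgcat} applies to $T$. It yields a computable structure $\S_1$ of computable dimension $2$ with
\[
CatSpec(\S_1) = \{\deg(X) : \{X\} \geq_w [T]\}.
\]
Next I would observe that $\{X\} \geq_w [T]$ means some $q \in [T]$ satisfies $q \leq_T X$, and since $p$ is Turing-least in $[T]$ this is equivalent to $X \geq_T p$. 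Hence $CatSpec(\S_1)$ is exactly the upper cone above $\mathbf d$, so $\mathbf d$ is the degree of categoricity of $\S_1$. Since $\S_1$ has computable dimension $2$, it has spectral dimension $1$ by the remark preceding the corollary, so $\mathbf d$ is a \emph{strong} degree of categoricity, as required.

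The hard work was entirely absorbed into \cref{thm:rigiddgcat} and \cref{lem:dgcattreeable}; the only conceptual point one must notice is the identification of the Muchnik condition $\{X\} \geq_w [T]$ with membership in the upper cone above $\mathbf d$, which uses precisely the defining property of treeability (the existence of a Turing-least path).
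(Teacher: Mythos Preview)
Your proof is correct and follows exactly the approach the paper intends: the forward direction is \cref{lem:dgcattreeable}, and the backward direction applies \cref{thm:rigiddgcat} after observing that $\mathbf d\geq\mathbf 0''$ forces $[T]\geq_w\{\emptyset''\}$, then uses the remark that computable dimension~$2$ implies spectral dimension~$1$. The paper states this even more tersely, but the content is identical.
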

\begin{question}
  Is every treeable degree the degree of categoricity of a structure?
\end{question}
\begin{question}
  Is every degree of categoricity treeable?
\end{question}

Which degrees are treeable? Recall that $f\in \omega^\omega$ is a $\Pi^0_1$
function singleton if there is a tree $T$ with $[T]=\{f\}$. Clearly degrees that contain $\Pi^0_1$ function
singletons are treeable degrees and one can find plenty of examples of those in
the literature. Let us summarize.

Abusing notation we say that a degree $\mathbf d$ is a $\Pi^0_1$ function singleton if it contains
a $\Pi^0_1$ function singleton. We now list examples of classes of degrees that
are $\Pi^0_1$ function singletons and thus by \cref{thm:rigiddgcat} degrees of
categoricity of rigid structures with computable dimension $2$.

Most of these examples are obtained using the following trick due to Jockusch
and McLaughlin~\cite{jockusch1969}. Let $X$ be
a $\Pi^0_2$ singleton, i.e., the only solution to a $\Pi^0_2$ predicate. 
Then it is defined by a formula $\forall u\exists v R(u,v,X)$ which can be
rewritten as
\[ \forall u \exists (v>u) \forall (x<v) R(u,v,\chi_X\restrict x) \]
where $R(u,v,\chi_X\restrict x)$ implies $R(u,v,\chi_X\restrict y)$ for $y<x$.
Let $f(u)$ be the string $\sigma\preceq \chi_X$ of length $\mu
v[R(u,v,\sigma)]$. Then $f\leq_T X$ by definition and $X\leq_T f$ as the length
of the strings in the range of $f$ is unbounded. The function $f$ itself is
a $\Pi^0_1$ singleton as it is the unique solution to the equation
\[ \forall x (f(x)\in 2^{<\omega} \land R(x,|f(x)|,f(x)))\land \forall
(\tau\preceq f(x)) \neg R(x,|\tau|,\tau).\]

So, a degree $\mathbf d$ contains a $\Pi^0_1$ function singleton if and only
it contains a $\Pi^0_2$ singleton. The following theorem is well known.
\begin{lemma}[{folklore, see~\cite[Section
  II.4.]{sacks1990}}]\label{lem:jumpfuncsing}
  For all computable ordinals $\alpha$, $\mathbf 0^{(\alpha)}$ is a $\Pi^0_1$
  function singleton.
\end{lemma}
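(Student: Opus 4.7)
The plan is to invoke the equivalence noted just before the lemma: a Turing degree contains a $\Pi^0_1$ function singleton if and only if it contains a $\Pi^0_2$ singleton. So it suffices to exhibit, for each computable ordinal $\alpha$, a $\Pi^0_2$ singleton of degree $\mathbf 0^{(\alpha)}$. I would proceed by effective transfinite recursion along Kleene's $\mathcal O$, strengthening the statement so as to produce, uniformly in a notation $a \in \mathcal O$ for $\alpha$, a $\Pi^0_2$ formula $\phi_a(X)$ whose unique solution $X_a$ satisfies $X_a \equiv_T \emptyset^{(\alpha)}$.

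The base case $\alpha = 0$ is immediate with $\phi(X) = (X = \emptyset)$. For the successor step $\alpha = \beta+1$, given $\phi_\beta$ with unique solution $Y \equiv_T \emptyset^{(\beta)}$, let $\phi_\alpha(Z)$ assert, via a fixed computable pairing $Z = \langle Y, J\rangle$, that $\phi_\beta(Y)$ holds and $J = Y'$. The jump clause
\[ \forall e\, \bigl[\, e \in J \leftrightarrow \exists s\ \Phi^Y_{e,s}(e)\!\downarrow\, \bigr] \]
is $\Pi^0_2$ in $(Y,J)$: splitting the biconditional, the forward implication $\forall e\,(e \notin J \vee \exists s\,\Phi^Y_{e,s}(e)\!\downarrow)$ is $\Pi^0_2$ and the backward implication $\forall e\,\forall s\,(\neg \Phi^Y_{e,s}(e)\!\downarrow \vee e \in J)$ is $\Pi^0_1$. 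Conjoined with the $\Pi^0_2$ formula $\phi_\beta$, the result is still $\Pi^0_2$ and its unique solution is $\langle Y, Y'\rangle$, of degree $\emptyset^{(\beta+1)}$.

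For a limit $\alpha = \sup_n \alpha_n$ presented by a computable fundamental sequence of notations $(a_n)_n$, the inductive hypothesis together with uniformity supplies a single computable predicate $R$ so that each $\phi_{a_n}(X)$ takes the form $\forall u\, \exists v\, R(n, u, v, X)$. I would set
\[ \phi_a(Z) \;\equiv\; \forall n\ \phi_{a_n}\bigl((Z)_n\bigr), \]
which unfolds to $\forall\,\langle n, u\rangle\ \exists v\, R\bigl(n, u, v, (Z)_n\bigr)$, hence is still $\Pi^0_2$ in $Z$. Its unique solution is the join $\bigoplus_n X_{a_n}$, of degree $\bigoplus_n \emptyset^{(\alpha_n)} \equiv_T \emptyset^{(\alpha)}$.

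The principal bookkeeping point, and the only real subtlety, is preserving uniformity: the index of $\phi_a$ must be recoverable computably from $a \in \mathcal O$, so that at a limit stage the fundamental sequence yields a computable sequence of indices that can be amalgamated into one $\Pi^0_2$ formula rather than something of higher quantifier complexity. This is exactly what effective transfinite recursion along $\mathcal O$ is designed for, and with it in place the induction closes.
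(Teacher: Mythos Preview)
The paper does not prove this lemma; it is stated as folklore with a citation to Sacks, so there is no argument to compare against. Your proof is correct and is essentially the standard one found in that reference: build, by effective transfinite recursion on $\mathcal O$, sets that are uniformly $\Pi^0_2$ singletons of the right degree, and then invoke the Jockusch--McLaughlin trick the paper describes just before the lemma.

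One small point deserves to be made more explicit. At the limit stage you assert that $\bigoplus_n X_{a_n}$ has degree $\bigoplus_n \emptyset^{(\alpha_n)} \equiv_T \emptyset^{(\alpha)}$. For this you need not only that the index of $\phi_{a_n}$ is computable from $n$ (which you do address), but also that the Turing equivalences $X_{a_n} \equiv_T \emptyset^{(\alpha_n)}$ are \emph{uniform} in $n$; otherwise the join of sets each individually equivalent to $\emptyset^{(\alpha_n)}$ need not have degree $\emptyset^{(\alpha)}$. This uniformity does hold---the reductions at each clause of the recursion are given by a fixed procedure from the notation, so an index for the equivalence is itself computable from $a$---but it is a second induction hypothesis that should be carried alongside the one you state.
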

\begin{lemma}[{folklore, see~\cite{odifreddi1999}}]\label{lem:betweenfuncsing}
  If $\mathbf d$ is a $\Pi^0_1$ function singleton, then so is every $\mathbf
  c$ with $\mathbf d\leq\mathbf c\leq \mathbf d'$.
\end{lemma}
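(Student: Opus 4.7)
The plan is to reduce the statement to the production of a $\Pi^0_2$ singleton of degree $\mathbf c$, after which the Jockusch--McLaughlin trick recalled just before \cref{lem:jumpfuncsing} furnishes a $\Pi^0_1$ function singleton of the same degree. Let $X \in \mathbf d$ be a $\Pi^0_2$ singleton, say the unique solution to $\forall u\, \exists v\, R(u,v,X)$ for some computable $R$, and let $C \in \mathbf c$ with $X \leq_T C \leq_T X'$. By the Shoenfield limit lemma relativized to $X$, I would fix a total computable Turing functional $\Phi$ such that $C(n) = \lim_s \Phi^X(n,s)$ for every $n$.

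The key step is to introduce a witness function $W \colon \omega \times \omega \to \{0,1\}$ by $W(n,s) = 1$ iff $\Phi^X(n,s) = C(n)$; since $X \leq_T C$, we have $W \leq_T C$, hence $\deg(X \oplus C \oplus W) = \mathbf c$. I then claim that $(X, C, W)$ is the unique triple $(Y, D, V)$ satisfying the conjunction
\begin{enumerate}
  \item[(i)] $\forall u\, \exists v\, R(u, v, Y)$,
  \item[(ii)] $\forall n, s\, \bigl(V(n, s) = 1 \leftrightarrow \Phi^Y(n, s) = D(n)\bigr)$,
  \item[(iii)] $\forall n\, \forall t\, \exists s \geq t\, (V(n, s) = 1)$.
\end{enumerate}
Clauses (i) and (iii) are $\Pi^0_2$, and (ii) is $\Pi^0_1$ in the oracle $(Y, D, V)$ since $\Phi$ is total, so the conjunction is $\Pi^0_2$. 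For uniqueness, (i) forces $Y = X$ and hence fixes $\Phi^Y$; then (ii) and (iii) combine to demand that $\Phi^X(n, s) = D(n)$ holds for infinitely many $s$. Because $\Phi^X(n, \cdot)$ is eventually constantly $C(n)$, the only value attained infinitely often is $C(n)$, so $D = C$; clause (ii) then forces $V = W$.

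The main obstacle I foresee is the apparent complexity mismatch: the naive description ``$C = \lim_s \Phi^X(\cdot, s)$'' has three alternating quantifiers $\forall n\, \exists s_0\, \forall s \geq s_0$ and is intrinsically $\Pi^0_3$. The witness $W$ circumvents this by letting us replace ``eventually equal to $D(n)$'' by ``infinitely often equal to $D(n)$''---a $\Pi^0_2$ condition---while uniqueness is preserved because an eventually-constant sequence attains precisely its limit value infinitely often. Once the $\Pi^0_2$ singleton $(X, C, W)$ is in hand, a single application of the Jockusch--McLaughlin trick produces a $\Pi^0_1$ function singleton of degree $\mathbf c$, completing the argument.
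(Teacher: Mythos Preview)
Your argument is correct. Both your proof and the paper's reduce to exhibiting a $\Pi^0_2$ singleton of degree $\mathbf c$ and then invoking the Jockusch--McLaughlin trick, but the reductions differ. The paper simply observes that $C$ is $\Delta^0_2(D)$, picks a $\Delta^0_2$ predicate $\psi(X,y)$ with $\psi(D,y)\leftrightarrow y\in C$, and notes that $C\oplus D$ is the unique solution of $\phi(X_{odd})\land\forall y\,(\psi(X_{odd},y)\leftrightarrow 2y\in X)$; the $\Delta^0_2$-ness of $\psi$ is exactly what makes the biconditional expressible in $\Pi^0_2$ form. You instead enlarge the object to $X\oplus C\oplus W$, where $W$ records the stages at which the approximation already agrees with $C$, and replace ``eventually equal to $D(n)$'' by the $\Pi^0_2$ condition ``equal to $D(n)$ infinitely often.'' Your route is longer but more self-contained: it makes explicit \emph{why} the naive $\Pi^0_3$ description of a limit can be brought down to $\Pi^0_2$, whereas the paper's two-line argument leaves that to the reader. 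Conversely, the paper's version is more economical---it needs no auxiliary component and no separate uniqueness analysis beyond ``$\psi$ is $\Delta^0_2$.'' One small point worth stating explicitly in your write-up: the totality of $\Phi$ on \emph{all} oracles (not just on $X$) is a WLOG that should be justified, since clause~(ii) being $\Pi^0_1$ depends on it.
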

\begin{proof}
Let $D\in\mathbf d$ be a $\Pi^0_2$ singleton defined via $\phi(X)$ and let $C\in \mathbf c$, then $C$ is limit computable from
$D$, say via the $\Delta^0_2$ function $\psi(X,y)$. Consider the set $C\oplus D$. It is the unique solution to the 
formula
\[ \phi(X_{odd})\land \left(\psi(X_{odd},y)\leftrightarrow 2y\in X\right)\]
where $X_{odd}=\{x: 2x+1\in X\}$.
Using the above trick we get that the degree $\mathbf c$ of $C\oplus D$ is a $\Pi^0_1$
function singleton.
\end{proof}
Combining \cref{lem:jumpfuncsing} for $\alpha\geq 2$ with
\cref{lem:betweenfuncsing} and \cref{thm:rigiddgcat} we get a new class of degrees that are
degrees of categoricities and answer a question posed by Csima and
Ng~\cite{csima}.

\begin{corollary}\label{cor:compd}
  Fix $\alpha>2$ and let $\mathbf d$ be such that $\mathbf 0^{(\alpha)}\leq \mathbf d\leq \mathbf
  0^{(\alpha+1)}$, then there is a computable rigid structure $\S$ with computable dimension $2$ such that $dgCat(\S)=\mathbf d$.
\end{corollary}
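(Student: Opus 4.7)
The plan is to combine Theorem~\ref{thm:rigiddgcat} with the two preceding lemmas on $\Pi^0_1$ function singletons to realize $\mathbf{d}$ as the unique path through a computable tree and then invoke the main theorem to produce the desired structure.

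First, since $\alpha>2$ and in particular $\alpha\geq 2$, Lemma~\ref{lem:jumpfuncsing} guarantees that $\mathbf 0^{(\alpha)}$ is a $\Pi^0_1$ function singleton. Using $\mathbf 0^{(\alpha+1)}=(\mathbf 0^{(\alpha)})'$, for any $\mathbf d$ with $\mathbf 0^{(\alpha)}\leq\mathbf d\leq\mathbf 0^{(\alpha+1)}$, Lemma~\ref{lem:betweenfuncsing} produces a computable tree $T\subseteq\omega^{<\omega}$ with $[T]=\{f\}$ for some $f\in\omega^\omega$ of Turing degree $\mathbf d$.

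Next, since $\alpha>2$ implies $\mathbf d\geq\mathbf 0^{(\alpha)}\geq\mathbf 0''$, we have $f\geq_T\emptyset''$, so $[T]\geq_w\{\emptyset''\}$. Theorem~\ref{thm:rigiddgcat} then yields a computable structure $\S_1$ of computable dimension $2$ with
\[CatSpec(\S_1)=\{\deg(X):\{X\}\geq_w[T]\}=\{\deg(X):X\geq_T f\},\]
whose least element is $\deg(f)=\mathbf d$, so $dgCat(\S_1)=\mathbf d$. Finally, since $[T]$ is a singleton, the ``furthermore'' clause of Theorem~\ref{thm:rigiddgcat} ensures that $\S_1$ is rigid.

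There is no substantive obstacle: the corollary is essentially a direct application of the preceding three results. The only care required is verifying the direction of Muchnik reducibility used in Theorem~\ref{thm:rigiddgcat}, which collapses to ordinary Turing reducibility once $[T]$ is taken to be the singleton $\{f\}$, and confirming that the hypothesis $\alpha>2$ places $\mathbf d$ above $\mathbf 0''$ as required by the main theorem.
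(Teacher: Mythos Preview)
Your proof is correct and follows exactly the route the paper intends: combine Lemma~\ref{lem:jumpfuncsing} and Lemma~\ref{lem:betweenfuncsing} to realize $\mathbf d$ as a $\Pi^0_1$ function singleton, then feed the resulting tree into Theorem~\ref{thm:rigiddgcat}. The paper states this combination in a single sentence without spelling out the details, so your write-up is in fact more explicit than the original.
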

We can get even more. Harrington~\cite{harrington1976} exhibited two $\Pi^0_1$
function singletons $h_1$ and $h_2$ that are arithmetically incomparable. This
implies the existence of a $\Pi^0_1$ function singleton that is not of the same
arithmetic degree as $\mathbf 0^{(\alpha)}$ for any computable ordinal $\alpha$
and thus also the existence of a degree of categoricity of this sort.
In~\cite{harrington1976a}, Harrington observed even more (see also the
exposition in Gerdes~\cite[Section 3.4]{gerdes2010}).
\begin{lemma}\label{lem:harrington}
  For every computable ordinal $\alpha$, there is a uniform sequence of $\Pi^0_1$
  function singletons $(h_i)_{i\in\omega}$ such that for all $n$ and all
  $\beta<\alpha$,
  \begin{enumerate}
    \tightlist
    \item $h_n$ is generalized $low_\beta$, i.e., ${h_n}^{(\beta)}\equiv_T
      h_n\oplus \emptyset^{(\beta)}$,
    \item $\forall X \left(X\leq_T\emptyset^{(\alpha)} \land X\leq_T
      \emptyset^{(\beta)} \implies X\leq_T \emptyset^{(\beta)}\right)$,
    \item and $h_n\not\leq_T \left(\bigoplus_{i\neq n} h_i\right)^{(\alpha)}$.
  \end{enumerate}
\end{lemma}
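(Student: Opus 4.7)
The plan is to adapt Harrington's construction of arithmetically incomparable $\Pi^0_1$ singletons from~\cite{harrington1976a} to produce a uniform $\omega$-sequence with the three combined properties. The central technique is forcing with computable subtrees of $\omega^{<\omega}$: we construct a nested sequence of computable trees whose intersection picks out a $\Pi^0_2$ singleton $X_n$ for each $n$, and then apply the Jockusch--McLaughlin trick recalled just before \cref{lem:jumpfuncsing} to convert $X_n$ into a $\Pi^0_1$ function singleton $h_n$ of the same Turing degree. Uniformity is automatic provided that the indexing of the requirements across $n$ is computable.

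First I would list the requirements. For every $n$, every $\beta<\alpha$, and every Turing functional $\Phi$ there is a jump-control requirement that decides whether $\Phi^{h_n^{(\beta)}}(e)\downarrow$ uniformly in $h_n\oplus \emptyset^{(\beta)}$; stringing these through the hyperarithmetic hierarchy by the standard iterated-low construction gives property (i). For every $n$ and every Turing functional $\Psi$ there is a diagonalization requirement forcing $h_n\neq \Psi^{(\bigoplus_{i\neq n}h_i)^{(\alpha)}}$, which yields property (iii). Property (ii) (a minimal-pair style condition relative to the $h_n$) is arranged by adding, for every pair of functionals $(\Phi,\Psi)$, a requirement that forces either $\Phi^{h_n^{(\alpha)}}$ to be partial, $\Psi^{\emptyset^{(\beta)}}$ to be partial, or their common total value to be computable from $\emptyset^{(\beta)}$; this is the classical minimal-pair strategy transposed to the hyperarithmetic setting.

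Next I would carry out a priority construction along a well-chosen listing of these requirements, at each stage passing from the current computable tree $T_s$ to a computable subtree $T_{s+1}$ that decides the active requirement. Lowness requirements are satisfied by the standard jump-tree construction inside $\Pi^0_1$ classes. Diagonalization requirements are handled by finding a $\Psi$-splitting and restricting $h_n$ to the side that makes $\Psi$ disagree with $h_n$ on some input, while leaving enough room on the remaining coordinates so that the other $h_i$ retain their flexibility; the fact that the $h_i$ for $i\neq n$ only appear on the right via their joint $\alpha$-jump lets us simply preserve a sufficiently thick subtree. Minimal-pair requirements are handled by alternating refinements between the two sides. Once the tree sequence is built, Jockusch--McLaughlin converts each $\Pi^0_2$ singleton into the desired $h_n$, uniformly in $n$.

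The main obstacle is the interaction of (i) with (iii): the generalized lowness condition constrains the jumps of $h_n$ so tightly that diagonalizing against $\alpha$-jumps of the remaining $h_i$'s seems to require reduction power that $h_n$ no longer has. Harrington's resolution, which I would follow, is to perform the diagonalization at the level of the ambient tree: one uses the fact that the whole construction is computable in $\emptyset^{(\alpha)}$ to read off, inside the tree, the relevant $\alpha$-jump computations from the other coordinates, and then diagonalizes against them in $h_n$ without ever requiring $h_n$ itself to compute these jumps. This appears in detail in Harrington's notes and is exposited in \cite[Section~3.4]{gerdes2010}, from which the present statement is extracted with only notational changes.
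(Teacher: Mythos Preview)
The paper does not prove this lemma at all: it is stated as a result of Harrington~\cite{harrington1976a} with a pointer to the exposition in Gerdes~\cite[Section 3.4]{gerdes2010}, and the authors immediately proceed to use it. So there is no argument in the paper to compare your proposal against; your sketch in fact goes well beyond what the paper offers, and you yourself close by citing the same two sources the paper cites.

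As to the sketch itself, it is a reasonable high-level outline of Harrington's tree-forcing construction, and your final paragraph correctly identifies the real tension (lowness versus diagonalization against $\alpha$-jumps of the other columns) and the resolution (diagonalize at the level of the ambient computable tree, where the construction has access to $\emptyset^{(\alpha)}$). One remark: condition~(2) as printed is vacuous---the hypothesis $X\leq_T\emptyset^{(\beta)}$ already gives the conclusion---and almost certainly a typo for something like $X\leq_T h_n^{(\alpha)}\wedge X\leq_T\emptyset^{(\beta)}\Rightarrow X\leq_T\emptyset^{(\beta)}$. You silently read it this way (``minimal-pair style condition relative to the $h_n$''), which is the intended meaning, but if you are writing this up you should flag the correction explicitly rather than leave the reader to guess.
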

While the degrees of the $h_n$ might not satisfy the conditions of
\cref{cor:dgcatifftreeable} we can take ${h_n}''$ to obtain a variation of
the result of Csima and
Stephenson~\cite{csima2018}, that exhibits a degree of categoricity computable
from $\mathbf 0''$ that is not in
$[\mathbf 0^{(\alpha)}, \mathbf 0^{(\alpha+1)}]$ for any computable ordinal
$\alpha$.
\begin{corollary}\label{cor:notalphacomp}
  For every computable ordinal $\alpha$, there is a degree of categoricity
  $\mathbf d$ such that $\mathbf d\not\leq_T \mathbf 0^{(\alpha)}$ and $\mathbf
  d\not\in [\mathbf 0^{(\gamma)},\mathbf 0^{(\gamma+1)}]$ for any computable
  ordinal $\gamma$.
\end{corollary}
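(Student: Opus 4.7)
The plan is to apply \cref{lem:harrington} to the given computable ordinal $\alpha$ (which, by replacing $\alpha$ with $\max(\alpha,3)$, I may assume satisfies $\alpha \geq 3$) and take $\mathbf d = \deg(h_0'')$ for the resulting sequence $(h_i)_{i\in\omega}$. Since $h_0$ is a $\Pi^0_1$ function singleton, two applications of \cref{lem:betweenfuncsing} (to the chains $h_0\leq_T h_0'\leq_T h_0'$ and $h_0'\leq_T h_0''\leq_T h_0''$) show that $h_0''$ is also a $\Pi^0_1$ function singleton. Because $\mathbf d \geq \mathbf 0''$, \cref{thm:rigiddgcat} applied to a computable tree whose unique path is in $\mathbf d$ then yields a rigid computable structure of computable dimension $2$ with degree of categoricity $\mathbf d$.

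Next I would verify the two containment conditions. For $\mathbf d \not\leq_T \mathbf 0^{(\alpha)}$, note that $h_0\leq_T h_0''$ while clause (3) of \cref{lem:harrington} (taken with the empty direct sum on the right) already gives $h_0\not\leq_T \emptyset^{(\alpha)}$. For $\mathbf d \not\in [\mathbf 0^{(\gamma)},\mathbf 0^{(\gamma+1)}]$, suppose to the contrary that $\mathbf 0^{(\gamma)}\leq \mathbf d\leq \mathbf 0^{(\gamma+1)}$. If $\gamma+1\leq \alpha$, then $\mathbf d\leq \mathbf 0^{(\gamma+1)}\leq \mathbf 0^{(\alpha)}$, contradicting the previous assertion. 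Otherwise $\gamma\geq \alpha$, so $\emptyset^{(\alpha)}\leq_T \mathbf d$. By clause (1) of \cref{lem:harrington} with $\beta = 2<\alpha$, one has $h_0''\equiv_T h_0\oplus\emptyset''\equiv_T h_0^{(2)}$, so $\emptyset^{(\alpha)}$ is computable from both $h_0^{(2)}$ and $\emptyset^{(\alpha)}$. Clause (2) of \cref{lem:harrington}, read in its intended meet-like form, then forces $\emptyset^{(\alpha)}\leq_T \emptyset^{(2)}$, which is absurd for $\alpha\geq 3$.

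The main obstacle is the second sub-case above: one must genuinely combine Harrington's generalized-lowness with the joint-independence content of \cref{lem:harrington}(2) to exclude $\emptyset^{(\alpha)}\leq_T h_0''$. The literal statement of clause (2) in the excerpt is vacuous, so the argument has to invoke its intended strengthened form---that anything below both $h_n^{(\beta)}$ and $\emptyset^{(\alpha)}$ is already below $\emptyset^{(\beta)}$---as provided by Harrington's construction in \cite{harrington1976a} (and the exposition of Gerdes). If only the weaker properties listed were available, a direct inspection of the construction of the $h_n$ would be required to rule out $\emptyset^{(\alpha)}\leq_T h_0\oplus \emptyset''$.
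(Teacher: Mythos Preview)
Your proof is correct and follows exactly the approach the paper sketches: the paper does not give an explicit proof of \cref{cor:notalphacomp} but simply remarks before the statement that ``we can take $h_n''$,'' and you have fleshed out precisely this argument. Your observation that clause~(2) of \cref{lem:harrington} as printed is vacuous is accurate---it is a typo, and the intended statement (as confirmed by its use in \cref{cor:nonarithcompd}, which asserts ${\mathbf d_n}^{(\beta)}\not\geq \mathbf 0^{(\alpha)}$) is the meet-type condition you invoke, namely that anything below both $h_n^{(\beta)}$ and $\emptyset^{(\alpha)}$ is already below $\emptyset^{(\beta)}$. One cosmetic point: your parenthetical ``taken with the empty direct sum on the right'' for clause~(3) is slightly off, since the sequence $(h_i)$ is infinite; what you actually use is that $(\bigoplus_{i\neq 0} h_i)^{(\alpha)}\geq_T \emptyset^{(\alpha)}$, so $h_0\not\leq_T\emptyset^{(\alpha)}$ follows a fortiori.
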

In particular, combining \cref{lem:harrington}, the relativized version of \cref{lem:jumpfuncsing} and
\cref{lem:betweenfuncsing} we can get a vast amount of new degrees of
categoricity outside 
the spine of jumps of $\mathbf 0$.
\begin{corollary}\label{cor:nonarithcompd}
  For any computable ordinal $\alpha$, there exist degrees $(\mathbf
  d_i)_{i\in\omega}$, such
  that for all $n$ and for all $\beta<\alpha$, $\mathbf
  d_n\not\leq\bigoplus_{i\neq n} {\mathbf
{d}_i}^{(\alpha)}$, ${\mathbf{d}_n}^{(\beta)}\not\geq \mathbf 0^{(\alpha)}$,
  and for all $\mathbf c\in [{\mathbf{d}_n}^{(\beta)},{\mathbf{d}_n}^{(\beta+1)}]$, $\mathbf c$ is the
  degree of categoricity of a rigid structure with computable dimension $2$.
\end{corollary}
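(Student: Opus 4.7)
The plan is to invoke \cref{lem:harrington} with a computable ordinal $\gamma$ chosen large enough above $\alpha$ (for concreteness, $\gamma = \alpha + \omega$) to obtain the uniform sequence $(h_i)_{i\in\omega}$ of $\Pi^0_1$ function singletons, and then set $\mathbf{d}_n = \deg(h_n'')$. The double-jump guarantees $\mathbf{d}_n \geq \mathbf{0}''$, so that every iterated jump $\mathbf{d}_n^{(\beta)}$ sits above $\mathbf{0}''$, which is the hypothesis needed to invoke \cref{thm:rigiddgcat}. Since $h_n$ and $\emptyset''$ are both $\Pi^0_1$ function singletons (by \cref{lem:harrington} and \cref{lem:jumpfuncsing}), the degree $\mathbf{d}_n$ contains the $\Pi^0_1$ function singleton $h_n \oplus \emptyset''$. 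Using the generalized low$_\beta$ property of $h_n$, one computes $\mathbf{d}_n^{(\beta)} \equiv_T h_n \oplus \emptyset^{(\beta+2)} \equiv_T h_n^{(\beta+2)}$ for $\beta + 2 < \gamma$, and the relativized form of \cref{lem:jumpfuncsing} certifies that each such degree is a $\Pi^0_1$ function singleton.

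For condition (b), I will argue by contradiction: if $\mathbf{0}^{(\alpha)} \leq \mathbf{d}_n^{(\beta)} \equiv_T h_n^{(\beta+2)}$ for some $\beta < \alpha$, then condition (2) of \cref{lem:harrington}, taken as the infimum statement that $X \leq_T h_n^{(\beta+2)}$ and $X \leq_T \emptyset^{(\gamma)}$ together imply $X \leq_T \emptyset^{(\beta+2)}$, applied with $X = \emptyset^{(\alpha)}$, forces $\alpha \leq \beta + 2$, which conflicts with $\beta < \alpha$ once $\gamma$ is chosen with enough headroom. For condition (a), if $\mathbf{d}_n \leq \bigoplus_{i \neq n} \mathbf{d}_i^{(\alpha)}$, then unfolding via generalized lowness yields $h_n \leq_T (\bigoplus_{i \neq n} h_i) \oplus \emptyset^{(\alpha+2)} \leq_T (\bigoplus_{i \neq n} h_i)^{(\alpha+2)}$, contradicting condition (3) of \cref{lem:harrington} applied with Harrington's ordinal at least $\alpha + 2$.

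For condition (c), I fix $\beta < \alpha$ and any $\mathbf{c} \in [\mathbf{d}_n^{(\beta)}, \mathbf{d}_n^{(\beta+1)}]$. Because $\mathbf{d}_n^{(\beta)}$ contains a $\Pi^0_1$ function singleton and $\mathbf{d}_n^{(\beta+1)} = (\mathbf{d}_n^{(\beta)})'$, \cref{lem:betweenfuncsing} delivers a $\Pi^0_1$ function singleton $f \in \mathbf{c}$, witnessed by some computable tree $T$ with $[T] = \{f\}$. Since $\mathbf{c} \geq \mathbf{d}_n^{(\beta)} \geq \mathbf{0}''$, the tree $T$ satisfies $[T] \geq_w \{\emptyset''\}$, so \cref{thm:rigiddgcat} hands over a rigid computable structure of computable dimension $2$ whose degree of categoricity is $\mathbf{c}$.

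The delicate point is aligning Harrington's ordinal $\gamma$ with the corollary's $\alpha$ so that generalized lowness, the infimum statement behind (b), and the independence condition (a) all remain active uniformly for every $\beta < \alpha$; any $\gamma$ with $\gamma \geq \alpha + \omega$ comfortably absorbs the $+2$ shifts introduced by basing everything on $h_n''$ rather than on $h_n$ itself.
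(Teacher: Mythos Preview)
Your overall strategy---invoke Harrington's lemma at a sufficiently large ordinal, then combine with the relativized \cref{lem:jumpfuncsing}, \cref{lem:betweenfuncsing}, and \cref{thm:rigiddgcat}---is the route the paper indicates. However, your verification of condition~(b) contains a genuine error, and it is not merely presentational: your chosen degrees actually fail~(b).

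You argue that $\alpha \leq \beta + 2$ together with $\beta < \alpha$ is a contradiction. It is not: for any successor ordinal $\alpha$, both $\beta = \alpha - 1$ and (when defined) $\beta = \alpha - 2$ satisfy $\beta < \alpha$ and $\alpha \leq \beta + 2$ simultaneously. Enlarging $\gamma$ does not help, because the problematic inequality $\emptyset^{(\alpha)} \leq_T h_n^{(\beta+2)}$ holds outright whenever $\beta + 2 \geq \alpha$, independently of~$\gamma$. Concretely, with your choice $\mathbf d_n = \deg(h_n'')$ and $\beta = \alpha - 2$ (for $\alpha$ a successor $\geq 2$), generalized lowness gives
\[
\mathbf d_n^{(\beta)} \equiv_T h_n^{(\alpha)} \equiv_T h_n \oplus \emptyset^{(\alpha)} \geq_T \emptyset^{(\alpha)},
\]
so condition~(b) is violated. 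The double jump you inserted to push $\mathbf d_n$ above $\mathbf 0''$ shifts the whole jump ladder up by two, and this costs you exactly the top two values of $\beta$ in the range $\beta < \alpha$.

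The paper instead takes $\mathbf d_n = \deg(h_n)$ directly. Then condition~(b) follows immediately from Harrington's infimum property for every $\beta < \alpha$, and condition~(a) from item~(3) of \cref{lem:harrington}. For condition~(c) one uses the \emph{relativized} version of \cref{lem:jumpfuncsing} to see that each $h_n^{(\beta)}$ is a $\Pi^0_1$ function singleton, then \cref{lem:betweenfuncsing} to get the same for every $\mathbf c \in [\mathbf d_n^{(\beta)}, \mathbf d_n^{(\beta+1)}]$, and finally \cref{thm:rigiddgcat}. A minor additional point: for infinite $\beta$ your identity $\mathbf d_n^{(\beta)} \equiv_T h_n^{(\beta + 2)}$ should read $h_n^{(2+\beta)}$, which equals $h_n^{(\beta)}$ once $\beta \geq \omega$; this is separate from the main issue but worth correcting.
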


It is also not hard to give examples of degrees that are not treeable
by looking at sufficiently generic degrees. This is not surprising as examples
of degrees that are not degrees of categoricity are often obtained by
building degrees that are sufficiently
generic~\cite{anderson2012,franklin2014}.

\begin{lemma}\label{lem:genericnottreeable}
  Let $\mathbf d$ be sufficiently generic, then $\mathbf d$ is not treeable.
\end{lemma}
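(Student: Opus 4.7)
The approach is to show that the set of treeable Turing degrees is countable; consequently the collection of reals of treeable degree is a countable (hence meager) subset of $\omega^\omega$, which a sufficiently generic real avoids by a standard diagonalization argument.

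First I would observe that each treeable degree is uniquely determined by a computable tree: if $T \subseteq \omega^{<\omega}$ is a computable tree and $[T]$ admits a Turing-least member, then any two such Turing-least members are Turing equivalent, so the degree $\mathbf{d}_T := \deg(p_T)$ is well-defined, where $p_T$ is any Turing-least element of $[T]$. Since there are only countably many computable trees, the collection of treeable degrees is at most countable. Moreover, every Turing degree is itself a countable set of reals---the reals Turing equivalent to $p_T$ are exhausted by the totals $\Phi_e^{p_T}$ as $e$ ranges over $\omega$---so the set of reals of treeable degree is a countable union of countable Turing degrees, hence countable, and in particular nowhere dense in Baire space.

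A sufficiently generic real $G$ then avoids this countable set by diagonalizing against each element. Explicitly, let $H \in \omega^\omega$ compute a listing $(p_n)_{n\in\omega}$ of representatives of all treeable degrees. For each pair $e, n$, the set
\[ D_{e,n} = \{\sigma \in \omega^{<\omega} : \exists k < |\sigma|,\ \Phi_e^{p_n}(k)\text{ converges to a value different from } \sigma(k)\} \]
is $H$-c.e.\ and dense in $\omega^{<\omega}$, so any $G$ which is $1$-generic relative to $H$ meets every $D_{e,n}$, yielding $G \neq \Phi_e^{p_n}$ whenever the latter is total. Thus $G$ is not Turing equivalent to any $p_n$, and $\deg(G)$ is not treeable.

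The main subtlety is pinning down what ``sufficiently generic'' means, but any notion of genericity strong enough to meet countably many $H$-c.e.\ dense sets of strings will do, so the hypothesis here is very mild---much weaker than the genericity typically required to establish non-degrees of categoricity as in \cite{anderson2012,franklin2014}.
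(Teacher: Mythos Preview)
Your counting argument does prove the lemma as literally stated: there are only countably many computable trees, hence countably many treeable degrees, and since each degree contains countably many reals the set of reals of treeable degree is countable and meager; a real $1$-generic relative to any oracle $H$ listing all such reals will avoid them, and so its degree is not treeable. This is a genuinely different route from the paper's, which instead exploits the definability of a treeable degree by a single arithmetic formula.

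However, your closing claim---that the genericity required is ``very mild, much weaker than the genericity typically required''---is false, and in fact the opposite holds. By \cref{lem:jumpfuncsing} every $\mathbf{0}^{(\alpha)}$ for $\alpha$ a computable ordinal is treeable, so any $H$ listing representatives of all treeable degrees must compute $\emptyset^{(\alpha)}$ for every computable $\alpha$; hence $H$ cannot be hyperarithmetic. Being $1$-generic relative to a non-hyperarithmetic oracle is a vastly stronger hypothesis than any finite level of Cohen genericity. The paper's argument, by contrast, writes down a $\Pi^0_4$ formula $\phi(x)$ (asserting that $x$ computes a path through $T$ and is computed back by every such path) which defines the treeable degree associated to $T$; a $4$-generic $g$ satisfying $\phi$ would have an initial segment forcing $\phi$, and one could then choose a generic extension of that segment Turing-incomparable with $g$, a contradiction. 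This shows $4$-genericity already suffices, and the definability analysis moreover yields the further structural facts exploited immediately afterwards (every treeable degree is a countable $\Pi^0_4$ class and contains a $\Pi^0_4$ singleton), which your counting argument does not provide.
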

\begin{proof}
  Let $T$ be a tree with Turing-least element of degree $\mathbf d$ and let 
  \begin{equation}\label{eq:pinsingleton} \phi(x)\iff \exists i \forall
      n\, \Phi_i^x\restrict n\in T \land \forall i (\forall n\,\Phi_i^x\restrict n \in T \to x\leq_T
    \Phi_i^x)\end{equation}
  Since $x\leq_T \Phi_i^x$ is $\Sigma^0_3$, $\phi(x)$ is $\Pi^0_4$.
  Furthermore, $\phi(f)$ if and only if $deg(f)=\mathbf d$. So, say $g\in
  \mathbf d$ is $4$-generic, then there is $\sigma \subset g$ such that $\sigma\Vdash
  \phi(x)$. But then for all generic $f\supset \sigma$ $\phi(f)$ and we can choose such
  $f$ to be Turing incomparable with $g$. Thus $\mathbf d$ is not treeable.
\end{proof}
\begin{proposition}
  For every computable ordinal $\alpha$, there is a hyperarithmetic degree
  $\mathbf d$ with $\mathbf d\geq  \mathbf 0^{(\alpha)}$ that is not a degree
  of categoricity.
\end{proposition}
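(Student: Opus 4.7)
The plan is to relativize the construction of Anderson and Csima~\cite{anderson2012}, who exhibited a $\Sigma^0_2$ degree that is not a degree of categoricity, to the cone above $\mathbf 0^{(\alpha)}$. Their construction builds a set $A$ computable from $\mathbf 0'$ via a forcing-and-diagonalization argument that ensures, for every computable structure $\A_e$, either $\deg(A) \notin CatSpec(\A_e)$ or some degree strictly below $\deg(A)$ already lies in $CatSpec(\A_e)$, so that $\deg(A) \neq dgCat(\A_e)$. Because their strategies use $\mathbf 0'$ only to enumerate the computable structures and to schedule diagonalization, they should relativize uniformly.

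Carrying out the relativization, I would construct a set $A$ with $\mathbf 0^{(\alpha)} \leq \deg(A) \leq \mathbf 0^{(\alpha+1)}$ that is not a degree of categoricity, and set $\mathbf d = \deg(A)$. The steps are: first, index the computable structures as $(\A_e)_{e \in \omega}$; second, for each pair $(e, i)$, where $i$ indexes a potential Turing reduction witnessing a smaller degree in $CatSpec(\A_e)$, install a requirement that either (a) produces a computable copy $\B$ of $\A_e$ whose isomorphisms with $\A_e$ are not computable from $\mathbf d$, defeating $\mathbf d \in CatSpec(\A_e)$, or (b) exhibits a set strictly below $\mathbf d$ that computes all requisite isomorphisms, defeating minimality. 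These requirements are organized on a priority tree and the construction runs with $\mathbf 0^{(\alpha+1)}$ as its oracle, yielding a hyperarithmetic $\mathbf d$.

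An alternative route, parallel in spirit to \cref{lem:genericnottreeable}, is to construct $G \in 2^\omega$ that is sufficiently Cohen-generic relative to $\mathbf 0^{(\alpha)}$ and take $\mathbf d = \deg(G \oplus \mathbf 0^{(\alpha)})$. The statement ``$\deg(x)$ is a degree of categoricity'' is an arithmetic predicate $\psi(x)$ of bounded complexity, as it involves quantifying over computable structures, computable copies, and Turing reductions. If one establishes that $\neg \psi$ is dense over the forcing conditions above $\mathbf 0^{(\alpha)}$, then sufficient genericity of $G$ forces $\neg \psi(\mathbf d)$; density is precisely the content of the relativized Anderson-Csima statement.

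The main obstacle I anticipate is verifying that Anderson and Csima's argument relativizes cleanly, i.e. checking that their diagonalization strategies access the oracle $\mathbf 0'$ only in ways that transfer to $\mathbf 0^{(\alpha+1)}$ without dependence on the specific arithmetic level of $\alpha$. A secondary technical point is pinning down the arithmetic complexity of ``is a degree of categoricity'' sharply enough to calibrate the required genericity of $G$ in the alternative approach; since the quantification over all computable copies of a structure and all degrees below $\mathbf d$ lives at a fixed arithmetic level, the requisite genericity remains hyperarithmetically achievable.
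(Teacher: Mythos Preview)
Your alternative route via genericity is close in spirit to the paper's argument, but it has a real gap: ``$\deg(x)$ is a degree of categoricity'' is \emph{not} an arithmetic predicate of bounded complexity. Being least in $CatSpec(\A)$ carries a universal second-order quantifier over all $Y$ in the spectrum, and even the clause $Y\in CatSpec(\A)$ hides a $\Sigma^1_1$ isomorphism test (``for every computable $\B$, if $\B\cong\A$ then \ldots''). No fixed finite level of Cohen genericity decides such a predicate, so you cannot calibrate the required genericity the way you propose, and the density statement you defer to collapses back onto the unproved relativization. The paper avoids this entirely by interposing \emph{treeability}: by \cref{lem:dgcattreeable} every strong degree of categoricity is treeable, and treeability of $\deg(x)$ is honestly $\Pi^0_4$ in $x$---this is exactly the formula $\phi(x)$ of \cref{eq:pinsingleton}. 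The paper then replaces $x$ by $x^{(\alpha)}$ in that formula, takes a hyperarithmetic $(\alpha{+}4)$-generic $\mathbf c$, and reruns the argument of \cref{lem:genericnottreeable} verbatim to conclude that $\mathbf c^{(\alpha)}$ is not treeable. So the paper's proof is essentially two lines once the treeability framework is in hand, whereas your route would have to rediscover some arithmetic surrogate of this kind.

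Your first approach is also more than a literal relativization. You correctly keep the $\A_e$ computable, but your option~(a)---building a computable $\B\cong\A_e$ with no $\mathbf d$-computable isomorphism to $\A_e$---is simply unavailable whenever $\mathbf 0^{(\alpha)}$ already computes isomorphisms between all computable copies of $\A_e$, since $\mathbf d\geq\mathbf 0^{(\alpha)}$. Anderson--Csima's $\Sigma^0_2$ construction never confronts a $\mathbf d$ with this much power, so arranging the (a)/(b) split uniformly along a priority tree in this regime is genuinely new content rather than an oracle substitution, and your proposal does not indicate how it would go.
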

\begin{proof}
  Let $\psi(x)$ be given similar to $\phi(x)$ as defined in
  \cref{eq:pinsingleton} but replace $x$ with $x^{(\alpha)}$. Let $\mathbf c$
  be a hyperarithmetic $(\alpha+4)$-generic degree, then $\mathbf c^{(\alpha)}$ is not a degree of
  categoricity as it is not treeable by the same argument as in
  \cref{lem:genericnottreeable} with $\psi(x)$ in place of $\phi(x)$.
\end{proof}
Note that by \cref{eq:pinsingleton} in the proof of
\cref{lem:genericnottreeable} every treeable degree is a countable $\Pi^0_4$ class.
Tanaka~\cite{tanaka1972} showed that every countable $\Sigma^0_{n+1}$ class contains
a $\Pi^0_n$ singleton. So, in particular, if $\mathbf d$ is treeable, then it
contains a $\Pi^0_4$ singleton. 
\begin{proposition}
  Every strong degree of categoricity contains a $\Pi^0_4$ singleton.
\end{proposition}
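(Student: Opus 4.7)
The plan is to combine two observations already made in the paper immediately above this proposition, essentially reading off a $\Pi^0_4$ singleton from the characterization of strong degrees of categoricity as treeable. Let $\mathbf d$ be a strong degree of categoricity. First I would invoke \cref{lem:dgcattreeable} to conclude that $\mathbf d$ is treeable; hence there is a computable tree $T\subseteq\omega^{<\omega}$ with a Turing-least path of degree $\mathbf d$.

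Next I would identify the set of representatives of $\mathbf d$ as a countable $\Pi^0_4$ class. As noted in the proof of \cref{lem:genericnottreeable}, the formula $\phi(x)$ from \cref{eq:pinsingleton} is $\Pi^0_4$, and it was shown there that $\phi(f)\iff\deg(f)=\mathbf d$. Consequently the set
\[
C_{\mathbf d}=\{f\in\omega^\omega:\deg(f)=\mathbf d\}
\]
is a $\Pi^0_4$ class, and it is countable because every Turing degree contains only countably many reals.

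Finally I would apply Tanaka's theorem as quoted in the paragraph preceding the statement: every countable $\Sigma^0_{n+1}$ class contains a $\Pi^0_n$ singleton. Viewing $C_{\mathbf d}$ as a $\Sigma^0_5$ class and taking $n=4$ yields a $\Pi^0_4$ singleton $f\in C_{\mathbf d}$. Since $f\in C_{\mathbf d}$ we have $\deg(f)=\mathbf d$, so $\mathbf d$ contains a $\Pi^0_4$ singleton, as required. I do not foresee any serious obstacle: the complexity computation for $\phi(x)$ has already been carried out in the paper, so the argument reduces to chaining \cref{lem:dgcattreeable}, the $\Pi^0_4$ description of $C_{\mathbf d}$, and Tanaka's theorem.
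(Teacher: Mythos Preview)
Your proposal is correct and is essentially identical to the paper's own argument: the paragraph immediately preceding the proposition already records that \cref{eq:pinsingleton} makes every treeable degree a countable $\Pi^0_4$ class and then invokes Tanaka's theorem (with the $\Pi^0_4\subseteq\Sigma^0_5$ inclusion) to extract a $\Pi^0_4$ singleton, which combined with \cref{lem:dgcattreeable} gives the result.
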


\section{Degrees of categoricity in $[\mathbf 0',\mathbf 0'']$}\label{sec:csimang}
In this section we show that all degrees between $\mathbf 0'$ and $\bf{0''}$
are degrees of categoricity. In \cite{fokina2010}, Fokina, Kalimullin and
Miller showed that every degree d.c.e.\ in and above $\mathbf{0}^{(n)}$ for
some $n$ is a degree of categoricity. They did this by noting that their proof
that every d.c.e.\ degree is a degree of categoricity relativises to give that every degree d.c.e. in and above a given degree is a degree of categoricity \emph{relative to the given degree}, and then using Marker extensions to get the desired result. Now that Csima and Ng have shown that every $\Delta^0_2$ degree is a degree of categoricity in \cite{csima}, we can verify that the methods of Fokina, Kalimullin and Miller also apply in this case, and obtain the desired result.

To begin, we give the definition of a categoricity spectrum relative to a degree.
\begin{definition}
	Let $\tau$ be a computable vocabulary, let $\bf{c}$ be a Turing degree, let $\A$ be a $\bf{c}$-computable $\tau$-structure,
	and let $(\B_e)_{e\in\omega}$ be an enumeration of all $\bf{c}$-computable
	$\tau$-structures. The \emph{categoricity spectrum of $\A$ relative to $\bf{c}$}
	is the set
	\[CatSpec_{\bf{c}}(\A)=\bigcap_{e\in \omega: \B_e\cong \A}\{deg(X): (\exists
	f:\A\cong\B_e) X\geq_T f\}.\]
\end{definition}

The proof given in Theorem 5.9 of \cite{fokina2010} shows that if ${\bf
d}\geq_T {\mathbf{0}^{(m)}}$, and if there exists
a $\mathbf{0}^{(m)}$-\emph{acceptable} structure $\mathcal{M}$ with
$CatSpec_{\mathbf{0}^{(m)}}(\mathcal{M})$ the upper cone above ${\bf d}$, then
${\bf d}$ is a strong degree of categoricity.
A $\mathbf{0}^{(m)}$-\emph{acceptable} structure is one with
a $\mathbf{0}^{(m)}$-computable copy in which all predicates have an infinite
computable subset. So it remains to argue that if $\mathbf 0'<\mathbf d<\mathbf
0''$, then there is a  ${\mathbf{0}^{(1)}}$-\emph{acceptable} structure
$\mathcal{M}$ with $CatSpec_{\mathbf{0}^{(1)}}(\mathcal{M})$ the upper cone above ${\bf d}$.

We now consider the proof given in \cite{csima} that every $\Delta^0_2$ degree
is a strong degree of categoricity. It proceeds by considering a $\Delta^0_2$
appoximation to a set $D \in {\bf d}$, an effective list of the possible
computable structures, and builds the structures $\A$ and $\hat{\A}$ that
witness ${\bf d}$ as a strong degree of categoricity in a stage-by-stage
construction. Though the proof as described in the paper uses infinitely many
relation symbols, it is noted that this is just a convenience - the structure
can be easily converted into a graph using standard codings. Running the
construction relative to an oracle $\mathbf 0'$, for a degree ${\bf d} \geq
{\bf0'}$ that is $\Delta^0_2$ in ${\bf 0'}$ (i.e., $\mathbf 0'\leq\mathbf d\leq
\mathbf 0''$), and adjoining the standard copy of a structure with strong
degree of categoricity $\mathbf 0'$ then yields a structure with strong degree
of categoricity ${\bf d}$ relative to $\mathbf 0'$. Moreover, when we transform
the presentation into a graph, we easily obtain the infinite computable subset
of the edge relation required to guarantee an acceptable copy of the structure
- they occur as we mark off the independent modules where the different
requirements will act. Combining this with \cref{cor:compd} and Csima and Ng's
result we obtain the following.
\begin{theorem}\label{thm:delta2relativized}
  Every degree $\mathbf d$ with $\mathbf
  0^{(\alpha)}\leq\mathbf d\leq \mathbf 0^{(\alpha+1)}$ for some computable
  ordinal $\alpha$ is the degree of categoricity of a computable structure.
\end{theorem}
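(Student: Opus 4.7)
The plan is to split the proof into three cases based on $\alpha$. For $\alpha = 0$, the degree $\mathbf d$ is $\Delta^0_2$, so the theorem reduces to the Csima--Ng result~\cite{csima}. For $\alpha \geq 2$, the conclusion follows directly from \cref{cor:compd}, which in fact yields a rigid structure of computable dimension $2$. The remaining and genuinely new case is $\alpha = 1$, i.e., the interval $[\mathbf 0', \mathbf 0'']$, which is the actual content of this section.

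For this case the plan is to follow the strategy laid out in the preceding discussion, which in turn follows Theorem 5.9 of~\cite{fokina2010}. First, I would relativize the Csima--Ng construction to $\mathbf 0'$. Since any $\mathbf d$ with $\mathbf 0' \leq \mathbf d \leq \mathbf 0''$ is $\Delta^0_2$ relative to $\mathbf 0'$, feeding a $\Delta^0_2(\mathbf 0')$ approximation to a representative of $\mathbf d$ into the relativized construction produces two $\mathbf 0'$-computable isomorphic structures $\A$ and $\hat\A$ such that every isomorphism between them computes some element of $\mathbf d$ over $\mathbf 0'$, and at least one isomorphism has degree exactly $\mathbf d$. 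Setting $\mathcal M = \A$ yields a $\mathbf 0'$-computable structure with $CatSpec_{\mathbf 0'}(\mathcal M)$ equal to the upper cone above $\mathbf d$.

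Second, I would verify that $\mathcal M$ is $\mathbf 0'$-acceptable after converting it to a graph by the standard coding. Csima and Ng nominally use infinitely many relation symbols for convenience, but their requirements act on disjoint \emph{modules} of the universe; once the structure is coded as a graph, each module contributes a computably identifiable collection of edges, supplying the infinite computable subset of the edge relation required for acceptability. Finally, I would invoke the Fokina--Kalimullin--Miller Marker extension construction from their Theorem 5.9: combining a Marker-type extension of $\mathcal M$ with the standard structure whose strong degree of categoricity is $\mathbf 0'$ produces a computable structure whose degree of categoricity is exactly $\mathbf d$. The main obstacle is not conceptual but verificational: one must read the Csima--Ng construction carefully enough to confirm both that it relativizes cleanly and that the graph coding preserves $\mathbf 0'$-acceptability. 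Neither check involves any essentially new ideas, which is why the argument can be stated in this compressed form.
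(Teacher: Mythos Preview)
Your proposal is correct and follows essentially the same approach as the paper: split into the cases $\alpha=0$ (Csima--Ng), $\alpha\geq 2$ (\cref{cor:compd}), and $\alpha=1$, and for the latter relativize the Csima--Ng construction to $\mathbf 0'$, verify $\mathbf 0'$-acceptability via the module structure of the graph coding, and invoke Theorem~5.9 of~\cite{fokina2010}. The only cosmetic difference is that the paper adjoins the standard structure with strong degree of categoricity $\mathbf 0'$ as part of building the $\mathbf 0'$-computable structure $\mathcal M$ (before checking acceptability), whereas you fold that step into the final Marker-extension application; either placement works.
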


\section{Not the degree of categoricity of a rigid
structure}\label{sec:bazhyama}
\cref{cor:compd} shows that for $\alpha>2$, every degree $\mathbf d$ between $\mathbf
0^{(\alpha)}$ and $\mathbf 0^{(\alpha+1)}$ is the degree of categoricity of
a rigid structure. Bazhenov and Yamaleev showed that there is a $2$-c.e.\
degree that is not the degree of categoricity of a rigid
stucture~\cite{bazhenov2017d}. In this section we prove the following theorem.
\begin{theorem}\label{thm:notrigid}
  There is a degree $\mathbf d$, $\mathbf 0'<\mathbf d<\mathbf 0''$, that is
  not the degree of categoricity of a rigid structure.
\end{theorem}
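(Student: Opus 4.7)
I would proceed by generalizing the Bazhenov--Yamaleev construction from the $2$-c.e.\ setting at the base level to one jump higher. Specifically, I would build a set $D$ that is $2$-c.e.\ in $\mathbf 0'$ (hence $\Delta^0_2(\mathbf 0')$, so $\leq_T \mathbf 0''$) via a $\mathbf 0'$-priority argument. The requirements split into two families: standard requirements force $\mathbf 0' <_T D <_T \mathbf 0''$, and for each index $e$, a requirement $R_e$ ensures that $\mathbf d = \deg(D)$ is not the degree of categoricity of the $e$-th computable structure $\M_e$ in the case that $\M_e$ happens to be rigid. Since every computable rigid structure appears as some $\M_e$, handling this family suffices.

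The strategy for $R_e$ mirrors the B--Y template. Assuming $\M_e$ is rigid, for any two computable copies the isomorphism between them is unique, hence a $\Pi^0_1$ function singleton, and its $\mathbf 0'$-computable finite-string approximations are well behaved. The strategy runs a game against these approximations and the $2$-c.e.-in-$\mathbf 0'$ approximation to $D$, aiming for one of two outcomes: either some computable copy of $\M_e$ has its unique isomorphism not computable from $D$, killing $\mathbf d$ as an upper bound for the categoricity spectrum of $\M_e$; or else a uniformly identifiable degree strictly below $\deg(D)$ already computes all such isomorphisms, killing $\mathbf d$ as the least element. Both outcomes ensure that the degree of categoricity of $\M_e$ is not $\mathbf d$. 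The flexibility of being $2$-c.e.\ (rather than merely c.e.)\ in $\mathbf 0'$ is essential: it allows the strategy to both enumerate and retract elements of $D$ at appropriate $\mathbf 0'$-detectable stages, which is precisely what B--Y exploit one level below.

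The main obstacle will be orchestrating the priority interactions on top of the $\mathbf 0'$ oracle. Each $R_e$ needs liberty to add and remove elements of $D$ at specific $\mathbf 0'$-identifiable moments without disturbing the jump-avoidance requirements or higher-priority $R_i$'s. As in B--Y, rigidity is the load-bearing assumption here: it guarantees that at most one isomorphism attempt is \emph{live} in the relevant $\Pi^0_1$ tree of partial isomorphisms at any time, which keeps the injury accounting finite and tractable rather than forcing a full tree-of-strategies construction. The bulk of the verification then amounts to checking that the B--Y combinatorics transfer faithfully to the $\mathbf 0'$-relativization and that the resulting $D$ is neither computable from $\mathbf 0'$ nor computes $\mathbf 0''$, both of which are standard once the priority setup is in place.
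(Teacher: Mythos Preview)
Your outline has two genuine gaps relative to what the paper actually does.

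First, and most seriously: you propose to run a $\mathbf 0'$-oracle priority construction, but the diagonalizing copies you need must be \emph{computable}, not $\mathbf 0'$-computable. In the B--Y template, the strategy for a triple $(\A,\B,g)$ builds a fresh computable copy $\mathcal N$ and an isomorphism $f:\A\cong\mathcal N$ with $f\not\leq_T D$; this only works because the whole construction is computable. If you simply relativize to $\mathbf 0'$, the $\mathcal N$ you build is only $\mathbf 0'$-computable and tells you nothing about the categoricity spectrum among computable copies. The paper's fix is not a relativization: it runs a \emph{computable} construction that tracks $\emptyset'$ via true stages (at stage $s$ one only trusts work done at $s$-true stages), so that the $\mathcal N_e$ are genuinely computable while the $\Delta^0_3$ objects $g_e$ are approximated correctly along the true path. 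The true-stage bookkeeping is exactly the new ingredient over B--Y, and your proposal does not supply it.

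Second, your two-outcome dichotomy is not the B--Y mechanism, and outcome~2 (``a uniformly identifiable degree strictly below $\deg(D)$ already computes all isomorphisms'') is not something the construction can arrange: the structures $\M_e$ are given, and you have no leverage to force that some $\mathbf c<\mathbf d$ lies in $CatSpec(\M_e)$. The paper does not attempt this. Instead it indexes requirements by triples $(\A_e,\B_e,g_e)$ and only aims to show that whenever $g_e$ is an isomorphism with $\deg(g_e)=\deg(D)$, there is a computable copy $\mathcal N_e$ whose unique isomorphism from $\A_e$ is not $\leq_T D$. This yields only that $\mathbf d$ is not a \emph{strong} degree of categoricity of any rigid structure. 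The passage to ``not the degree of categoricity of a rigid structure'' is a separate, nontrivial step: the paper cites the Bazhenov--Kalimullin--Yamaleev result that any degree of categoricity of a rigid structure is already strong (spectral dimension~$1$). You should either invoke that proposition or supply an argument for it; the construction itself does not deliver it.
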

\cref{thm:notrigid} shows a limitation to Turetsky's technique of coding paths through trees into the
automorphisms of a structure: If one were to code treeable degrees below $\mathbf
0''$, one would not be able to preserve the 1-1 correspondence between automorphisms and
paths through the tree.

The main ingredient of the proof of \cref{thm:notrigid} is the following lemma.
Its proof combines Bazhenov and Yamaleev's construction with a true stage argument.
\begin{lemma}\label{lem:notstrongrigid}
  There exists a degree $deg(D)$, $\mathbf 0'<deg(D)<\mathbf 0''$ that
  satisfies the following: for all computable structures $\A$ and $\B$ and all
  $\Delta^0_3$ isomorphisms $g:\A\cong \B$, $deg(D)=deg(g)$ implies that there
  is a computable structure $\mathcal N$ and an isomorphism $f:\A\cong \mathcal
  N$ with $deg(f)\not\leq deg(D)$.
\end{lemma}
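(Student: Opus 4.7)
The plan is to construct $D$ via a $\emptyset''$-computable priority construction on a tree of strategies, lifting the Bazhenov--Yamaleev argument one level by wrapping it in a true-stage framework. Three families of requirements will act:
\begin{itemize}
\item $P_e\colon D\neq \Phi_e^{\emptyset'}$, forcing $\emptyset'<_T D$;
\item $Q_e\colon \Phi_e^D\neq \emptyset''$, forcing $D<_T \emptyset''$;
\item $R_{a,b,e}$, one for each triple $(a,b,e)$ coding computable structures $\A_a,\B_b$ and a $\Delta^0_3$ functional $g_e$, which, under the hypothesis that $g_e$ is an isomorphism $\A_a\cong \B_b$ with $g_e\equiv_T D$, builds a computable structure $\mathcal N_{a,b,e}\cong \A_a$ together with a computable isomorphism $\phi\colon \A_a\to \mathcal N_{a,b,e}$, and defeats every $\Phi_j^D$ that might name an isomorphism $\A_a\to \mathcal N_{a,b,e}$.
\end{itemize}

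The true-stage layer is needed because $D$ must itself be $\Delta^0_2$ while each $g_e$ is only $\Delta^0_3$, so the natural notion of ``correct guess'' lives at $\emptyset'$-true stages. Accordingly, nodes of the priority tree guess $\emptyset'$-true stages of the $\Delta^0_3$ approximation to $g_e$, together with the usual $\Pi^0_2$-outcomes for $P_e$ and $Q_e$. The $\emptyset''$-oracle running the construction computes these outcomes, and each $R_{a,b,e}$ strategy commits actions only at stages its guessed outcomes pronounce true. Inside such a true block, an $R_{a,b,e}$-strategy follows the Bazhenov--Yamaleev template: it picks a fresh witness $x\in \A_a$, waits for some $\Phi_j^D(x){\downarrow}$ to propose an image in $\mathcal N_{a,b,e}$, and then either extends $\mathcal N_{a,b,e}$ as a computable structure so that the proposed value becomes incompatible with any completion of the partial isomorphism built so far, or, if the extension freedom for $\mathcal N_{a,b,e}$ has been used up, changes $D$ below the use of $\Phi_j^D(x)$ to destroy the threatening computation at the next true stage. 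The strategies for $P_e$ and $Q_e$ are the standard diagonalizations against $\emptyset'$-computable reals and $D$-computable functionals, sitting comfortably inside the same tree.

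The hard part will be reconciling the competing pressures on $D$: the $R_{a,b,e}$-strategies demand frequent changes of $D$ to destroy offending computations, the $Q_e$-strategies demand enough stability in $D$ to ensure $\emptyset''\not\leq_T D$, and the active hypothesis $g_e\equiv_T D$ means whatever $D$ does must remain recoverable from the eventual true approximation to $g_e$. I would resolve this by reserving disjoint columns of $\omega$ for each $R_{a,b,e}$ and for the bit-level responses to $P_e$, and by letting only the true-path copy of a strategy ever mutate its own column; under the true-stage framework, any changes made on non-true stages are cleaned up before they can injure $Q_e$ or destabilize the recovery of $D$ from $g_e$. Verification then proceeds along the leftmost path visited infinitely often: $P_e$ and $Q_e$ are met by the familiar finite-injury argument lifted one jump, and each $R_{a,b,e}$ on the true path is either vacuous (its hypothesis failing) or met because every candidate $\Phi_j^D$ has been diagonalized against, yielding a computable $\mathcal N_{a,b,e}\cong \A_a$ with no isomorphism $\A_a\to \mathcal N_{a,b,e}$ computable from $D$, as required.
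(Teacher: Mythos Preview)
Your proposal misses the central mechanism, and one sentence in it is actually fatal. You write that $R_{a,b,e}$ ``builds a computable structure $\mathcal N_{a,b,e}\cong \A_a$ together with a computable isomorphism $\phi\colon \A_a\to \mathcal N_{a,b,e}$.'' But if $\A_a$ is rigid and $\phi$ is computable, then $\phi$ is the \emph{unique} isomorphism $\A_a\to\mathcal N_{a,b,e}$, and it is trivially $\leq_T D$; there is then no $f$ with $\deg(f)\not\leq\deg(D)$ to produce, and no way to defeat all $\Phi_j^D$. The whole point is that the isomorphism $f_e$ you build must \emph{not} be computable---indeed not $D$-computable---while $\mathcal N_e$ itself is.

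Relatedly, your description of the $R$-module does not contain the Bazhenov--Yamaleev idea. There is no ``extension freedom'' for $\mathcal N$: it must be a copy of the fixed structure $\A_a$, so you cannot simply extend it to make a proposed value incompatible. And merely changing $D$ to kill $\Phi_j^D(x)$ does nothing permanent; the computation may reappear with the same value. The actual device, which the paper uses, exploits the standing hypothesis $g_e=\Theta_e^D$ and $D=\Phi_e^{g_e}$: enumerating the witness $x_{e,i}$ into $D$ forces $g_e$ to change below $\phi_e(x_{e,i})$ to restore $L(e)$, and that change supplies a nontrivial partial automorphism $g_e[s]^{-1}\circ g_e[t]$ of $\A_e$. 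Composing the current embedding with this map (the ``$g$-extension'') yields a new embedding $f_e[s]\neq f_e[t]$ of the \emph{same} computable $\mathcal N_e$ into $\A_e$, achieving $f_e\neq\Psi_i^D$ once $x_{e,i}$ is extracted again. Without this mechanism your $R$-strategies cannot succeed against rigid structures.

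On the framework: the paper does not use a tree of strategies, $P_e$, or $Q_e$. The construction is a computable finite-injury argument run through $\emptyset'$-true stages: at stage $s$ one copies $D$ from the last $s$-true stage and lets $S_{e,i}$-strategies act in linear priority order. One simply sets $D=\hat D\oplus\emptyset'$, so $\emptyset'\leq_T D$ is free; $D\leq_T\emptyset''$ holds because $D=\lim_j D[i_j]$ along the $\emptyset'$-computable sequence of true stages; and the strict inequalities then follow \emph{a posteriori} from the $S_{e,i}$ requirements, since $\mathbf 0'$ and $\mathbf 0''$ are themselves strong degrees of categoricity of rigid structures. Running the construction with a $\emptyset''$ oracle, as you suggest, would also endanger the computability of $\mathcal N_e$, which in the paper's setup is guaranteed precisely because each stage is computable and the $g$-extensions, being compositions with partial isomorphisms, never alter the diagram already committed to $\mathcal N_e$.
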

Notice that \cref{lem:notstrongrigid} already shows that there is a degree
$\mathbf d\in (\mathbf 0',\mathbf 0'')$ that is not the strong degree of
categoricity of any rigid structure.  
\begin{proof}[Proof of \cref{lem:notstrongrigid}]
Let $g\leq \mathbf 0''$, then $g$ is limit computable in $\mathbf 0'$. Assume
that we have an approximation to $\emptyset'$ in terms of finite strings
$\sigma_i$ where
\[|\sigma_i|=\min\{\mu x [x\enters \emptyset'_i], i\}\quad\text{ and
}\quad  (\sigma_i(x)\downarrow \Rightarrow \sigma_i(x)=\emptyset'_i(x)).\]
Let the triple $(\A,\B,g)$ consist of computable graphs $\A$,
$\B$ and a $\Delta^0_2(\emptyset')$ function $g$. Then we may approximate
this triple via the triples $(\A[s],\B[s],g[s])$ where $\A[s]$ and $\B[s]$
are substructures of $\A$, respectively $\B$, with universe $\{0,\dots,s\}$, and, say that
$g$ is limit computable via $\Phi_e^{\emptyset'}$, then
$g[s](x)=\Phi_e^{\sigma_s}(x,t)$ where $t=\max \{ r<s:
\Phi_{e,s}^{\sigma_s}(x,r)\downarrow\}$. Note that it is not necessarily the case that $\lim_{s\to\infty}
g[s]=g$. However, if $i_0,\dots$ is a sequence of true stages in our
approximation to $\emptyset'$, then $\lim_{s\to\infty} g[i_s]=g$.
We will work with an enumeration of the approximations of all such triples
$(\A_e,\B_e,g_e)_{e\in\omega}$. Our construction will build a set $D$,
$\emptyset'\leq_T D\leq_T\emptyset''$ and ensure that if
$g_e:\A_e\cong \B_e$ and $g_e\equiv_T D$, then there is a computable structure $\mathcal N_e\cong
\A_e$ and an isomorphism $f_e:\mathcal N_e\cong\A_e$  such that $f_e\not\leq_T
D$. That is, for every $e$ we aim to satisfy the following infinite set of requirements
\[ S_{e,i}:\quad \left( g_e: \A_e\cong \B_e \land g_e\in \Delta^0_3 \land
D=\Phi_e^{g_e} \land g_e=\Theta_e^D\right) \Ra f_e\neq \Psi_i^D\]
and the global requirement 
\[ G: \emptyset'\leq_T D.\]
The requirement $G$ is satisfied by building $D=\hat D\oplus \emptyset'$. 

Given $\sigma,\tau\in2^{<\omega}$ we say that $\tau$ is
\emph{$\sigma$-true}, if $\tau\preceq \sigma$.
We call a stage $t$ an $s$-true stage if the stage $t$ approximation $\sigma_t$ is 
$\sigma_s$-true. The key difference of our construction to Bazhenov and
Yamaleev's construction is that at stage $s$ we
only consider the work of strategies acting at $\sigma_s$-true stages to build
$D$.

For people who have seen that construction it might be
easy to see that satisfying $\mc S_{e,i}$ using true stages gives a set that is
$\Delta^0_2(\emptyset')$ and can not be the degree of categoricity of a rigid
structure.

The difficulty arises from coding $\emptyset'$. The key argument for the coding
is that if we take
$D=\hat D\oplus \emptyset'$, then the computation $\Psi_i^D$ might only change
finitely many times at any $x$ given that we preserve finite injury of $\hat
D$ on the true path. This is because at some point in the approximation
of $\emptyset'$ no bit lower than the use of $\Psi_i^D(x)$ will change anymore.
Thus, if at that point we have ensured that on the true path $\Psi_i^D(x)\neq f_e$, then this
will hold in the limit. However, with more complicated sets, such as
$\emptyset''$ this is not true anymore.

Let $i_0,\dots$ be the sequence of true stages for $\emptyset'$.
Together with $D=\lim_{j\to \omega} D[i_j]$ we will build
$\mathcal N_e=\lim_{j\to\omega}\mathcal N_e[j]$. Say that a stage $s$ in the
construction is $e$-expansionary if $\mathcal N_e[s+1]\neq \mathcal N_e[s]$. An
$e$-expansionary stage is always caused by some strategy that tries to satisfy
$S_{e,i}$. Let $s$ be an $e$-expansionary stage, then $\mathcal
N_e[s]$ will have universe $\{0,\dots, s\}$ and will embed into $\mc A_e$.
Say we are at some stage $s$, and $t<s$ is $e$-expansionary.
The strategy acting for a requirement $S_{e,i}$ might force us to change the
embedding $f_e[t]$ because $g_e[s]\neq g_e[t]$. There are two ways we can do
this.
\begin{center}
\begin{tikzcd}
  \A_e[s]\ar[rr,bend left, "{g_e[s]}"]& \A_e[t]\ar[r, "{g_e[t]}"]\ar[l,"id"']& \B_e[s]\\
  \mc{N}_e[s]\ar[u,"{id \circ f_e[t]\circ id^{-1}}"',dashed, bend right]\ar[u,"{g_e[s]^{-1}\circ
g_e[t]\circ f_e[t]\circ id^{-1}}" ,dotted, bend
  left]&&\mc{N}_e[t]\ar[ul,"{f_e[t]}"']\ar[ll,"id"] 
\end{tikzcd}
\end{center}
We can extend any partial embedding of $\mc N_e[s]$ into $\A_e[s]$ by
recursively mapping elements not in the domain to the least element not in the
range as $N_e[s]=A_e[s]=\{0,\dots,s\}$. We say that $f_e[s]$ is the
\emph{id-extension} of $f_e[t]$ if it is obtained by extending $id \circ f_e[t]\circ
id^{-1}$ and $f_e[s]$ is a \emph{g-extension} of $f_e[t]$ if it is obtained by
extending $g_e[s]^{-1}\circ
g_e[t]\circ f_e[t]\circ id^{-1}$. Note that we can take \emph{g-extensions}
because technically $g_e[t]:\A_e[t]\ra\B_e[t]$ and
$\B_e[t]\subseteq \B_e[s]$.

We use upper case greek letters to denote Turing operators and the
corresponding lower case greek letters to denote their use. We abuse notation
and do not write the oracle in the use function as the oracle will be implied
from the context. As we are only interested in total functions we can make the
following assumption on uses without loss of generality: If
$\Phi^X(x)\downarrow$, then for all $y<x$, $\Phi^X(y)\downarrow$ and
$\phi(y)<\phi(x)$.
During the construction we will use the following length agreement functions:
\begin{multline*} L(e)[s]=\max \bigg\{x : \forall (y\leq x)\Big[
      \Phi_e^{g_e}(y)[s]=D[s](y)\land\\
      \forall (z<\phi_e(y)[s]) \left( \Theta_e^{D}(z)[s]=g_e(z)[s] \land
      (\{0,\dots, \phi_e(y)[s]\}\subseteq\A_e[s])\right)\Big]\bigg\}
  \end{multline*}
  \[ l(e,i)[s]=\max \{ x: \forall (y\leq x)(f_e(y)[s]=\Psi_i^{D}(y)[s])\}\]

The priority ordering on our requirements is given by $S_{e,i}< S_{e',i'}$ if and only if $\langle e,i\rangle <\langle e',i'\rangle$. If the
above is true, we say that $S_{e,i}$ has higher priority than $S_{e',i'}$. 
Assume we are at stage $s$ of the construction and let $t$ be
the last $s$-true stage. At the beginning of $s$ we let $D[s]=D[t]$. A strategy $S_{e,i}$ might require attention because of the following reasons:
\begin{enumerate}
  \item Its witness $x_{e,i}$ is undefined.
  \item $x_{e,i}\not\in D[s]$, $x_{e,i}<L(e)[s]$, and $f_e(y)$ is
    undefined for some $y<\phi_e(x_{e,i})[s]$.
  \item $x_{e,i}\not\in D[s]$, $x_{e,i}<L(e)[s]$, and $\phi_e(x_{e,i})[s]<l(e,i)[s]$.
  \item $x_{e,i}\in D[s]$ and $x_{e,i}< L(e)[s]$.
\end{enumerate}
\paragraph*{Construction.} At stage $0$ define $D[0]=\emptyset$, $\mc N_e=\emptyset$
and $f_e=\emptyset$ for all $e\in\omega$.
Say we are at stage $s>0$ of the construction. Let $D[s]=D[t]$ and then
enumerate $2x+1$ into $D[s]$ for all $x$ such that $x\in \emptyset'_{s}$. Find
the least requirement that requires attention and initialize all lower priority
strategies. Then choose the following depending on what caused the attention.
\begin{enumerate}
  \item Define $x_{e,i}$ to be an even number larger than all numbers used in
    the construction so far.
  \item Let $s^-$ be the last
    $e$-expansionary stage. Define $\mc N_e[s]$ and $f_e[s]$ as the $id$-extension of $f_e[s^-]$.
  \item Enumerate $x_{e,i}$ into $D[s]$.
  \item Extract $x_{e,i}$ from $D[s]$. Let $s^-$ be the last $e$-expansionary
    stage. Define $\mc N_e[s]$ and $f_e[s]$ as the $g$ extension of $f_e[s^-]$.
\end{enumerate}
\paragraph*{Verification.}
We first verify the construction for a single requirement $S_{e,i}$ in
isolation. Say $S_{e,i}$ acts for the first time at stage $s_0$ and picks
$x_{e,i}$. Notice that as $S_{e,i}$ will never be initialized it won't ever
act again because of reason (1). Now, say that $s_1>s_0$ is a true stage such that $\emptyset'_{s_0}\restrict
x_{e,i}/2=\emptyset'\restrict x_{e,i}/2$. By our isolation assumption the bits below $x_{e,i}$
in our approximation to $D$ won't change anymore after $s_1$.

Say $S_{e,i}$ needs attention because of reason (2) at true stage $s_2>s_1$. We
do an $id$-extension of $f_e[s^{-}]$. Such an extension can always be done
because $f_e:\mc N_e\to \A_e[s^-]$ and $\A_e[s^-]\subseteq \A_e[s_2]$. If
$S_{e,i}$ never needs attention again we have satisfied $S_{e,i}$. Now, say
that $s_3$ is a true stage such that $S_{e,i}$ receives attention. Then this
can not be because of reason (2) because
if $x_{e,i}<L(e)[s_3]$, then $\Theta^D_e(z)[s_3]=\Theta^D_e(z)[s_2]=g_e[s]$ for all
$z<\phi_e(x_{e,i})[s]$ and so $\phi_e(x_{e,i})[s_2]=\phi_e(x_{e,i})[s_3]$ and
we have already extended $f_e$ at
$s_2$. Thus $S_{e,i}$ must act because of reason (3) and so $x_{e,i}$ is
enumerated into $D$ at stage $s_3$. Notice that, since
$s_3$ is a true stage, for every $t\geq s_3$, $s_3$ is $t$-true. Let $s_4$ be
the next true stage such that $S_{e,i}$ receives attention. Notice that this
must be because of reason (4) as $x_{e,i}\in D[s_4]$. We extract $x_{e,i}$
from $D$ and do a $g$-extension hoping to diagonalize against $\Psi_i^D$
computing $f_e$. It might be the case that there is a stage $t$ (not
necessarily a true stage), $s_3<t<s_4$ such that $t=s_4^-$, $g_e[t]\subseteq
g_e[s_4]$ and the $g$-extension of $f_e[t]$ is an $id$-extension. Say $S_{e,j}$
does the $g$-extension at $t$, then $S_{e,j}$ is not of higher priority than
$S_{e,i}$ as otherwise $S_{e,i}$ would have been reset. But then
$L(e)[t]>x_{e,j}>x_{e,i}$ and thus $S_{e,j}=S_{e,i}$. So, after doing the
$g$-extension at $s_4$ we have that
\[ f_e[s_4]\supseteq f_e[t]\neq f_e[s_3] = \Psi_i^D[s_3]\subseteq\Psi_i^D[s_4].\]
So we have successfully diagonalized and given that $D$ does not change below
$x_{e,i}$ anymore $\Psi_i^D\neq f_e$.

We have that $x_{e,i}>L(e)[s_4+1]$. This allows $g_e$ to change and
achieve that $L(e)[s_5]>x_{e,i}$ at some stage $s_5$. The risk is that
$\phi_e(x_{e,i})[s_5]<\phi_e(x_{e,i})[s_4]$ and that thus we might want to
take action because of $(3)$ again. Notice that this would imply that
$g_e$ changed below $\phi_e(x_{e,i})[s_4]$. But at $s_4$ we had that
\[\forall (z<\phi_e(x_{e,i})[s_4])(\Theta_e^{D}(z)[s_4]=g_e(z)[s_4]).\]
So this is particularly true for all $z<\phi_e(x_{e,i})[s_5]$. By our
assumption on use functions $\phi$ that $x<y$ implies $\phi(x)<\phi(y)$ we have
that
\[\theta_e(\phi_e(x_{e,i})[s_5])<\theta_e(\phi_e(x_{e,i})[s_4])\leq
  x_{e,i}\]
and as $D[s_4]\restrict x_{e,i}=D[s_5]\restrict x_{e,i}$,
\[ \forall (z<\phi_e(x_{e,i})[s_5])(
\Theta_e^{D}(z)[s_4]=g_e(z)[s_5]=g_e(z)[s_4])\]
contradicting that $g_e$ changed below $\phi_e(x_{e,i})$. Notice that a true
stage $t$ is $s$-true for all $s>t$. Thus, in isolation, $S_{e,i}$ will never
require attention after stage $s_4$.

We now drop the isolation assumption. Notice first that if $t>s_4$ and
$f_e[t]\restrict \phi_e(x_{e,i})[s_4]\neq f_e[s_4]\restrict
\phi_e(x_{e,i})[s_4]$, then by the argument in the above paragraph this can only have happened because of
a $g$-extension of a higher priority argument which would cause $S_{e,i}$ to
initialize. Let us show that no
requirement is initialized infinitely often. Recall that in isolation every requirement
requires attention at most $4$ times on the true path. 
Using the fact that every true stage $t$ is
$s$-true for all $t>s$ we can see by induction that for every requirement there
is a stage after which it is not initialized again. By the same argument, there
is a stage $t$ such that if $S_{e,i}$ requires attention at $s>t$, then
it is the highest priority requirement needing attention and thus will receive 
attention at $s$.

Notice, that if the lefthand side of the implication
in $S_{e,i}$ is true, then $\mc N_e$ is necessarily infinite and isomorphic to
$\A_e$ as for every $i$, $S_{e,i}$ receives attention because of condition (2)
at least once. Furthermore, no $\Psi_i^D$ can compute $f_e$, by our above
argument. So, every requirement $S_{e,i}$ is satisfied. The requirement $G$ is
trivially satisfied in the limit.
\end{proof}
To obtain the proof of \cref{thm:notrigid} it remains to show that every degree
of categoricity of a rigid structure is strong. This follows immediately from
the following proposition due to Bazhenov, Kalimullin, and
Yamaleev~\cite{bazhenov2020a}.
\begin{proposition}[{\cite[Proposition 1, item (b)]{bazhenov2020a}}]
Suppose $\mathbf d$ is the degree of categoricity of a rigid structure. Then
there exists a rigid structure $\S$ with degree of categoricity $\mathbf d$ and
spectral dimension $1$.
\end{proposition}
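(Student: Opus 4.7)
The plan is to start from a rigid computable structure $\mathcal{A}$ with degree of categoricity $\mathbf d$ and build a rigid $\S$ that realizes $\mathbf d$ already on a single specified pair of computable copies. Since $\mathcal{A}$ is rigid, any two computable copies admit at most one isomorphism, so
\[
  CatSpec(\mathcal{A}) = \{\deg(X) : X \geq_T \phi_e \text{ for every } e \text{ with } \B_e \cong \mathcal{A}\},
\]
where $\phi_e$ is the unique isomorphism $\mathcal{A} \to \B_e$, and $\mathbf d$ is the minimum of this principal filter. Our target is to exhibit a rigid $\S$ with $CatSpec(\S)=CatSpec(\mathcal{A})$ together with two computable copies $\S_0,\S_1$ whose unique isomorphism has degree exactly $\mathbf d$, which is what spectral dimension $1$ demands.

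First I would take $\S$ to be a marked disjoint union $\S = \bigsqcup_{n \in \omega} \mathcal{A}_n$, where each $\mathcal{A}_n$ is a copy of $\mathcal{A}$ decorated with a rigid position tag (for instance, an attached finite rigid connected component coding $n$). The tags force any automorphism of $\S$ to preserve each slot setwise, and since each $\mathcal{A}_n$ is rigid, $\S$ is rigid. A slot-by-slot argument shows $CatSpec(\S) = CatSpec(\mathcal{A})$: given any $D \in \mathbf d$ and any computable copy of $\S$, one decodes it into a uniform sequence of computable copies of $\mathcal{A}$ and applies $D$ slot-wise; conversely, placing any $\B_e \cong \mathcal{A}$ into the zeroth slot and canonical copies elsewhere yields a computable $\S$-copy whose unique isomorphism restricts to $\phi_e$ on slot $0$.

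To witness spectral dimension $1$, let $\S_0 = \bigsqcup_n \mathcal{A}_n$ be the canonical presentation and construct $\S_1 = \bigsqcup_n \B_{h(n)}$ (with the same position tags) for a computable sequence $(h(n))_{n\in\omega}$ of indices of computable copies of $\mathcal{A}$. The unique isomorphism $\S_0 \to \S_1$ is then $\bigoplus_n \phi_{h(n)}$, whose degree is $\leq\mathbf d$ for free since $\mathbf d\in CatSpec(\S)$; the task is to arrange that every true $\phi_e$ is Turing-reducible to this join, giving the matching lower bound $\mathbf d$.

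The main obstacle is exactly the choice of $h$, because $\{e : \B_e \cong \mathcal{A}\}$ is not computable in general, so we cannot simply enumerate the genuine copies of $\mathcal{A}$. One remedy is to substitute for each $\B_e$ a computably ``guarded'' companion $\B_e^*$: it agrees with a canonical copy of $\mathcal{A}$ on initial segments until $\B_e$ has been confirmed to embed appropriately into $\mathcal{A}$ up to that stage, then follows $\B_e$ thereafter, designed so that $\B_e^* \cong \mathcal{A}$ exactly when $\B_e \cong \mathcal{A}$ and in that case $\phi_e \leq_T \phi_{h(n)}$ whenever $h(n)$ is an index for $\B_e^*$. Enumerating these guarded copies through $h$, and running a priority argument to verify that the slot-wise join recovers every individual $\phi_e$, would then yield the witnessing pair $\S_0,\S_1$ and complete the proof.
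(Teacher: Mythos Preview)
The paper does not prove this proposition; it is quoted from \cite{bazhenov2020a} and used as a black box, so there is no in-paper argument to compare against. I will simply assess your sketch on its own merits.

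Your overall architecture---form a tagged disjoint union $\S=\bigsqcup_n\mathcal{A}_n$ so that $\S$ stays rigid, and then exhibit a pair $(\S_0,\S_1)$ whose unique isomorphism collects all the $\phi_e$---is the natural one. However, the key step is not carried out, and as written it is internally inconsistent. You require every slot of $\S_1$ to hold a computable copy of $\mathcal{A}$ (this is what $\S_1\cong\S_0$ means), yet you also stipulate that your guarded companion satisfies $\B_e^*\cong\mathcal{A}$ \emph{exactly} when $\B_e\cong\mathcal{A}$. Enumerating the $\B_e^*$ through $h$ then places non-copies of $\mathcal{A}$ into infinitely many slots, so $\S_1\not\cong\S$ and there is no isomorphism to analyse. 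What you actually need is the opposite guarantee: a uniformly computable $\mathcal{C}_e$ that is \emph{always} isomorphic to $\mathcal{A}$, with the additional property that whenever $\B_e\cong\mathcal{A}$ the unique isomorphism $\mathcal{A}\to\mathcal{C}_e$ computes $\phi_e$. Your guarding scheme (``agree with $\mathcal{A}$ until $\B_e$ is confirmed to embed, then follow $\B_e$'') does not deliver this: embeddability of a finite piece of $\B_e$ into $\mathcal{A}$ is not in general decidable, and any false positive permanently corrupts the slot. The closing appeal to ``a priority argument'' does not substitute for specifying such a mechanism.

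A secondary point: the phrase ``applies $D$ slot-wise'' in your argument that $\mathbf d\in CatSpec(\S)$ hides a uniformity assumption. Knowing that $D$ computes the unique isomorphism to every individual computable copy of $\mathcal{A}$ does not automatically yield a single $D$-computable procedure that, given an index for a copy, produces that isomorphism. Without this, a uniformly computable sequence of copies could give a computable $\S$-copy whose unique isomorphism from $\S$ is not $D$-computable, so that $CatSpec(\S)\subsetneq CatSpec(\mathcal{A})$ and $\S$ need not have degree of categoricity $\mathbf d$ at all. You either have to justify this uniformity or set up the argument so that only the easy inclusion $CatSpec(\S)\subseteq CatSpec(\mathcal{A})$ is used together with a direct verification that $\mathbf d$ computes the isomorphism for the particular $\S_1$ you build.
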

\printbibliography
\end{document}